\numberwithin{equation}{section}
\numberwithin{figure}{section}
\theoremstyle{definition}
\newtheorem{defn}{\protect\definitionname}[section]
\theoremstyle{plain}
\newtheorem{prop}{\protect\propositionname}[section]
\theoremstyle{remark}
\newtheorem{rem}{\protect\remarkname}[section]
\theoremstyle{plain}
\newtheorem{lem}{\protect\lemmaname}[section]
\theoremstyle{plain}
\newtheorem{thm}{\protect\theoremname}[section]
\theoremstyle{definition}
\newtheorem{example}{\protect\examplename}[section]
\providecommand{\definitionname}{Definition}
\providecommand{\examplename}{Example}
\providecommand{\lemmaname}{Lemma}
\providecommand{\propositionname}{Proposition}
\providecommand{\remarkname}{Remark}
\providecommand{\theoremname}{Theorem}
\begin{document}
\title{orthogonal coordinates on 4 dimensional Kähler manifolds }
\author{David L. Johnson}
\curraddr{Department of Mathematics, Lehigh University, 17 Memorial Walk, Bethlehem,
PA  18015, USA}
\keywords{Orthogonal coordinates, self-dual Kähler manifolds, K3-surfaces.}
\subjclass[2000]{53B20, 53B35, 53C25.}
\email{dlj0@lehigh.edu}
\thanks{The author wishes to thank Vincent Borrelli for many helpful comments
and suggestions, and also Andrei Moroianu for suggesting a much more
elegant approach to describing the complex structure tensor.}
\date{\today}
\begin{abstract}
The existence of orthogonal local coordinates is a generalization
of the manifold being conformally flat. It is always possible to construct
orthogonal coordinates on 2-manifolds, using geometric normal coordinates
or isothermal coordinates. In 1984, Dennis DeTurck and Dean Yang \cite{DeTurck-Yang}
showed the existence of orthogonal coordinates on any Riemannian 3-manifold.
Thus there are manifolds which have orthogonal coordinates, but are
not conformally flat, since the Cotton tensor presents an obstruction
to conformal flatness in dimension 3. They also showed that, for dimensions
at least 4, there is apparently an obstruction to the existence of
orthogonal coordinates, in that curvature components of the form $R_{ijkl}$,
with all 4 indices distinct, will vanish if the directions correspond
to orthogonal coordinates. Thus, in high dimensions, the existence
of orthogonal coordinates implies a certain sparseness of the Riemannian
curvature tensor. Recently, Paul Gauduchon and Andrei Moroianu showed
\cite{GM} that there are no orthogonal coordinates on $\mathbb{CP}^{n}$
or $\mathbb{HP}^{n}$, if $n>1$. 

The main results of this work are that no nontrivial self-dual Kähler
4-manifold (4 real dimensions) supports orthogonal local coordinates,
and also no nontrivial Ricci-flat Kähler 4-manifold supports orthogonal
coordinates. The first result uses the same technique developed by
Gauduchon and Moroianu in the special case of $\mathbb{CP}^{2}$ with
the Fubini-Study metric, but the second result uses purely algebraic
methods. 
\end{abstract}

\maketitle

\section{Introduction}

A local coordinate chart $\left\{ x_{1},\ldots,x_{n}\right\} $ on
a Riemannian manifold $M$ is \emph{orthogonal} if $\left<\frac{\partial}{\partial x_{i}},\frac{\partial}{\partial x_{j}}\right>=0$
whenever $i\neq j$. In that case, the Riemannian metric can be written
as $g=a_{1}^{2}dx_{1}^{2}+\cdots+a_{n}^{2}dx_{n}^{2}$, with of course
$a_{i}=\sqrt{\left<\frac{\partial}{\partial x_{i}},\frac{\partial}{\partial x_{i}}\right>}$.
In dimension 2, the existence of orthogonal coordinates is classical,
going back to the construction of geodesic normal coordinates along
a curve, and to isothermal coordinates \cite{Chern}. DeTurck and
Yang \cite{DeTurck-Yang} show that any 3-manifold has systems of
orthogonal coordinates, even those which are not conformally flat. 

In dimensions larger than 3, the existence of orthogonal coordinates
implies that, in the directions of those coordinates, $R_{ijkl}=0$
whenever all indices are distinct \cite{DeTurck-Yang}, so that the
curvature operator would be somewhat sparse in high dimensions. 

Particularly on Kähler manifolds, the existence of orthogonal coordinates
seems to be extremely restrictive, since the Kähler structure tends
to obstruct differential-geometric restrictions \cite{kaehler submersions}.
The results of Gauduchon and Moroianu \cite{GM} show that, in particular,
$\mathbb{CP}^{n}$, $n\geq2$, with the Fubini-Study metric does not
admit orthogonal coordinates. It is easy to show that there are no
orthogonal \emph{holomorphic} coordinates (below), but the harder
issue would be to show that no \emph{real }orthogonal coordinates
could be found, which Gauduchon and Moroianu did indeed establish.
They also show that the trivial examples of local products of Riemann
surfaces are not the only Kähler manifolds which do support orthogonal
coordinates.

The existence of orthogonal coordinates is a generalization of the
manifold being conformally flat, since in that case there is are local
coordinate charts so that the metric is of the form $g_{ij}=a\delta_{ij}$,
or, the coordinates are orthogonal with $a_{i}=a_{j}$ for all $i,j$.
Conformally-flat Kähler manifolds have been classified by \cite{Tanno,Tachi},
which in dimension 4 must either be flat or a product of a Riemann
surface of constant curvature, and one of the opposite constant curvature
\cite{Tanno}. In dimensions at least 6, Yano and Mogi \cite{Yano}
showed that a conformally-flat Kähler manifold must be flat.

In real dimension 4, using an elegant decomposition of the curvature
tensor due to Atiyah-Hitchen-Singer, Singer-Thorpe, and Claude LeBrun
\cite{AHS,Singer-Thorpe,LeBrun}, we show a slight extension of Gauduchon
and Moroianu's result to show that no self-dual, 4-dimensional Kähler
manifold supports orthogonal coordinates. The method of proof of this
result is due to Gauduchon and Moroianu \cite{GM}.

We are also able to establish that no nonflat Kähler 4-manifold which
is Ricci-flat supports orthogonal coordinates. Kähler 4-manifolds
which are Ricci-flat are K3 surfaces, 4-dimensonal Calabi-Yau manifolds.

Throughout this work we refer to the dimension of a manifold as its
real dimension, even if it is a complex manifold. We refer to a local
orthonormal moving frame $\left\{ e_{1},\ldots,e_{n}\right\} $ as
a \emph{frame} of (a neighborhood of) the space. In the case of a
complex manifold $M$ with complex-structure tensor $J:T_{*}\left(M\right)\to T_{*}\left(M\right)$,
we call a frame $\left\{ e_{1},\ldots,e_{2m}\right\} $ \emph{unitary}
if $Je_{2k+1}=e_{2k+2}$ and $Je_{2k+2}=-e_{2k+1}$ $k\in\left\{ 0,\ldots,m-1\right\} $.

\section{Orthogonal coordinates}
\begin{defn}
A Riemannian manifold $M^{n}$ with Riemannian metric $\left<\,,\,\right>$
has \emph{orthogonal coordinates} $\left\{ x_{1},\ldots,x_{n}\right\} $
in a neighborhood $U$ if, for each point $x\in U$, $\left<\frac{\partial}{\partial x_{i}},\frac{\partial}{\partial x_{j}}\right>=0$
whenever $i\neq j$. If each point of $M$ has orthogonal coordinates
in some neighborhood, then we say that $M$ \emph{has orthogonal coordinates
}or \emph{supports orthogonal coordinates}. We set (following the
notation of \cite{GM}), $a_{i}=\sqrt{\left<\frac{\partial}{\partial x_{i}},\frac{\partial}{\partial x_{i}}\right>}>0$. 
\end{defn}
Given an orthogonal coordinate system, the \emph{associated} frame
$\left\{ e_{1},\ldots,e_{n}\right\} $ is defined by setting $e_{i}:=\frac{1}{a_{i}}\frac{\partial}{\partial x_{i}}$.
From the definitions of the covariant derivative and Riemann curvature
tensor, Gauduchon and Moroianu show \cite{GM} the following useful
results, using the conventions that $R_{XY}Z:=\nabla_{\left[X,Y\right]}Z-\nabla_{X}\left(\nabla_{Y}Z\right)+\nabla_{Y}\left(\nabla_{X}Z\right)$,
and (with respect to the associated frame), $R_{ijkl}=\left<R_{e_{i}e_{j}}e_{k},e_{l}\right>$.
The notation here is somewhat different from \cite{GM}, but is equivalent.
\begin{prop}
\label{GM-proposition}\textbf{\emph{{[}Proposition 2.5 of \cite{GM}{]}}}
Let $M$ be a Riemannian manifold with orthogonal coordinates on a
chart $U$, with associated frame $\left\{ e_{1},\ldots,e_{n}\right\} $.
Then, 
\begin{enumerate}
\item If $i\neq j$, then $\nabla_{e_{i}}e_{j}=\frac{1}{a_{i}}e_{j}\left(a_{i}\right)e_{i}$,
so $\left[e_{i},e_{j}\right]=\frac{1}{a_{i}}e_{j}\left(a_{i}\right)e_{i}-\frac{1}{a_{j}}e_{i}\left(a_{j}\right)e_{j}$.
\item $\nabla_{e_{i}}e_{i}=-\sum_{j\neq i}\frac{1}{a_{i}}e_{j}\left(a_{i}\right)e_{j}$.
\item For any $i\neq j$, 
\begin{eqnarray*}
R_{e_{i}e_{j}}e_{i} & = & -\frac{1}{a_{j}}e_{i}e_{i}\left(a_{j}\right)e_{j}-\frac{1}{a_{i}}e_{j}e_{j}\left(a_{i}\right)e_{j}-\sum_{l\neq i,j}\left(\frac{1}{a_{i}}e_{j}e_{l}\left(a_{i}\right)e_{l}\right)\\
 &  & +\sum_{l\neq i,j}\left(\frac{1}{a_{i}}e_{j}\left(a_{i}\right)\frac{1}{a_{j}}e_{l}\left(a_{j}\right)e_{l}\right)-\sum_{l\neq i,j}\left(\frac{1}{a_{i}}e_{l}\left(a_{i}\right)\frac{1}{a_{j}}e_{l}\left(a_{j}\right)e_{j}\right).
\end{eqnarray*}
\item If $i,\,j,\,k$ are all distinct, 
\begin{eqnarray*}
R_{e_{i}e_{j}}e_{k} & = & \frac{1}{a_{i}}\nabla_{e_{j}}\left(da_{i}\right)\left(e_{k}\right)e_{i}-\frac{1}{a_{j}}\nabla_{e_{i}}\left(da_{j}\right)\left(e_{k}\right)e_{j},
\end{eqnarray*}
In particular, if $i,\,j,\,k,\,l$ are all distinct, $R_{ijkl}:=\left<R_{e_{i}e_{j}}e_{k},e_{l}\right>=0$.
\end{enumerate}
\end{prop}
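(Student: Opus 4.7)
The natural starting point is the Koszul formula applied to the coordinate basis $\partial_{i}=\partial/\partial x_{i}$ of the diagonal metric $g=\sum_{k}a_{k}^{2}dx_{k}^{2}$. Since $[\partial_{i},\partial_{j}]=0$, Koszul reduces to
$$
2\langle\nabla_{\partial_{i}}\partial_{j},\partial_{k}\rangle=\partial_{i}g_{jk}+\partial_{j}g_{ik}-\partial_{k}g_{ij},
$$
and since $g_{ij}=a_{i}^{2}\delta_{ij}$, only a small number of terms survive. A direct bookkeeping gives, for $i\neq j$,
$$
\nabla_{\partial_{i}}\partial_{j}=\frac{\partial_{j}(a_{i})}{a_{i}}\partial_{i}+\frac{\partial_{i}(a_{j})}{a_{j}}\partial_{j},\qquad
\nabla_{\partial_{i}}\partial_{i}=\frac{\partial_{i}(a_{i})}{a_{i}}\partial_{i}-\sum_{k\neq i}\frac{a_{i}\,\partial_{k}(a_{i})}{a_{k}^{2}}\partial_{k}.
$$
Converting to the associated frame by $e_{i}=\frac{1}{a_{i}}\partial_{i}$ and using the Leibniz rule $\nabla_{fX}(gY)=fg\nabla_{X}Y+f\,X(g)Y$, the coefficients of the $\partial_{j}$ terms in $\nabla_{e_{i}}e_{j}$ cancel, leaving precisely assertion (1); assertion (2) follows the same way, with the $\partial_{i}$ terms cancelling. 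The Lie bracket identity is then immediate from $[e_{i},e_{j}]=\nabla_{e_{i}}e_{j}-\nabla_{e_{j}}e_{i}$ (Levi-Civita is torsion-free).

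For (3) and (4), I would simply feed (1) and (2) into the definition
$$
R_{e_{i}e_{j}}e_{k}=\nabla_{[e_{i},e_{j}]}e_{k}-\nabla_{e_{i}}\nabla_{e_{j}}e_{k}+\nabla_{e_{j}}\nabla_{e_{i}}e_{k}.
$$
In case (4), with $i,j,k$ distinct, every ingredient $\nabla_{e_{j}}e_{k}$, $\nabla_{e_{i}}e_{k}$, $\nabla_{e_{i}}e_{j}$, $\nabla_{e_{j}}e_{i}$ is a multiple of $e_{i}$ or $e_{j}$ by part (1). Applying a further covariant derivative with respect to $e_{i}$ or $e_{j}$ produces terms of the form $e_{p}(f)\,e_{q}+f\,\nabla_{e_{p}}e_{q}$ with $p,q\in\{i,j\}$; by (1) again, $\nabla_{e_{p}}e_{q}$ lies in $\mathrm{span}\{e_{i},e_{j}\}$. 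Hence $R_{e_{i}e_{j}}e_{k}\in\mathrm{span}\{e_{i},e_{j}\}$, which gives $R_{ijkl}=0$ whenever $l\notin\{i,j,k\}$ without any further calculation. To pin down the coefficients, I would collect the $e_{i}$-component: $\nabla_{e_{j}}\nabla_{e_{i}}e_{k}$ contributes $e_{j}(e_{k}(a_{i})/a_{i})$, while $\nabla_{[e_{i},e_{j}]}e_{k}$ and $-\nabla_{e_{i}}\nabla_{e_{j}}e_{k}$ contribute exactly the terms needed to convert $e_{j}(e_{k}(a_{i}))/a_{i}$ into the Hessian $\tfrac{1}{a_{i}}(\nabla_{e_{j}}da_{i})(e_{k})=\tfrac{1}{a_{i}}[e_{j}e_{k}(a_{i})-(\nabla_{e_{j}}e_{k})(a_{i})]$, using that $\nabla_{e_{j}}e_{k}=\frac{e_{k}(a_{j})}{a_{j}}e_{j}$. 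The $e_{j}$-component is obtained by the same calculation with $i$ and $j$ exchanged (and a sign from the curvature's antisymmetry).

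For (3), the computation is identical in spirit but now the two ``off-axis'' summations appear. Here $\nabla_{e_{i}}e_{i}$ genuinely contains components in every $e_{l}$ direction (by (2)), so the terms $\nabla_{[e_{i},e_{j}]}e_{i}$, $\nabla_{e_{i}}\nabla_{e_{j}}e_{i}$, $\nabla_{e_{j}}\nabla_{e_{i}}e_{i}$ each contribute to the $e_{l}$-coefficient for $l\neq i,j$. Collecting these: the $e_{l}$-component of $-\nabla_{e_{i}}\nabla_{e_{j}}e_{i}$ produces the mixed second-derivative $-\tfrac{1}{a_{i}}e_{j}e_{l}(a_{i})\,e_{l}$, the $e_{l}$-component of $\nabla_{e_{j}}\nabla_{e_{i}}e_{i}=-\nabla_{e_{j}}\sum_{l\neq i}\tfrac{1}{a_{i}}e_{l}(a_{i})e_{l}$ yields (after applying (1) to each $\nabla_{e_{j}}e_{l}$) the $\sum\tfrac{1}{a_{i}a_{j}}e_{j}(a_{i})e_{l}(a_{j})e_{l}$ terms, and the remaining ``diagonal'' $-\tfrac{1}{a_{j}}e_{i}e_{i}(a_{j})e_{j}$ and $-\tfrac{1}{a_{i}}e_{j}e_{j}(a_{i})e_{j}$ contributions come from applying $\nabla$ twice to $e_{i}$ along $e_{i}$ and $e_{j}$ respectively.

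The only mild obstacle is the clerical one: the computations in (3) are lengthy because $\nabla_{e_{i}}e_{i}$ is a sum over $l\neq i$, so each second covariant derivative produces a double sum whose terms must be matched against (1). However, every manipulation is mechanical once (1) and (2) are in place, and since $\nabla$ is torsion-free and commutes with scalar multiplication only up to a first-derivative correction (already absorbed into the Koszul step above), no conceptual difficulty arises. No separate justification is needed for the last sentence of (4): it is a direct read-off from the explicit formula, or equivalently from the span observation made at the start of the curvature computation.
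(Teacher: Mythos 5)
Your proposal is correct: the Koszul-formula computation of $\nabla_{\partial_i}\partial_j$ and $\nabla_{\partial_i}\partial_i$ for the diagonal metric, the conversion to the frame $e_i=\frac{1}{a_i}\partial_i$, and the substitution into the curvature definition (you have correctly used the paper's sign convention $R_{XY}Z=\nabla_{[X,Y]}Z-\nabla_X\nabla_YZ+\nabla_Y\nabla_XZ$) reproduce all four assertions, and I have checked that the $e_i$-, $e_j$-, and $e_l$-components you describe do match the stated formulas, including the Hessian term $\frac{1}{a_i}\nabla_{e_j}(da_i)(e_k)$ and the span observation that yields $R_{ijkl}=0$. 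The paper itself gives no proof of this proposition, deferring entirely to Proposition 2.5 of \cite{GM}, so your argument is the standard one and fills that gap rather than diverging from it.
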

\begin{rem}
Statement (4) of this result was originally shown by DeTurck and Yang
\cite{DeTurck-Yang}. 
\end{rem}

\section{Dimension 4}

Assume now that the (real) dimension of $M$ is 4. In this case, the
curvature operator has a particularly nice decomposition. Following
\cite{Singer-Thorpe}, the curvature of any Riemannian manifold decomposes
into invariant components. The space of all such \emph{algebraic curvature
operators} in dimension $n$ is the space $\mathcal{R}_{n}:=\Lambda_{2}\left(\mathbb{R}^{n}\right)\circ\Lambda_{2}\left(\mathbb{R}^{n}\right)$
of symmetric operators on $\Lambda_{2}\left(\mathbb{R}^{n}\right)$,
with the standard inner product on $\Lambda_{2}\left(\mathbb{R}^{n}\right)$,
and $\left<R_{\left(v_{1},v_{2}\right)}v_{3},v_{4}\right>=\left<R\left(v_{1}\wedge v_{2}\right),v_{3}\wedge v_{4}\right>$.

$SO(n)$ operates on this space induced from its natural action on
$\mathbb{R}^{n}$, which, following Weyl \cite{Weyl}, decomposes
$\mathcal{R}_{n}$ into an orthogonal direct sum of invariant subspaces,
which is shown in \cite{Singer-Thorpe} to be
\begin{eqnarray*}
\mathcal{R}_{n} & = & \mathcal{I}\oplus\mathcal{RIC}_{0}\oplus\mathcal{W}\oplus\mathcal{S},
\end{eqnarray*}
where $\mathcal{I}$ are all multiples of the identity operator, $\mathcal{RIC}_{0}$
corresponds to the trace-free portion of the Ricci tensor, $\mathcal{W}$
is the Weyl tensor component, and $\mathcal{S}$ are those tensors
orthogonal to the kernel of the Bianchi map $b:\mathcal{R}_{n}\to\mathcal{R}_{n}$
defined by $b\left(R\right)_{\left(v_{1},v_{2}\right)}v_{3}=R_{\left(v_{1},v_{2}\right)}v_{3}+R_{\left(v_{2},v_{3}\right)}v_{1}+R_{\left(v_{3},v_{1}\right)}v_{2}$,
so any operator realizable as the Riemann curvature tensor of a manifold
satisfies $b\left(R\right)=0$. $\mathcal{S}$ is easily seen to be
isomorphic to $\Lambda_{4}\left(\mathbb{R}^{n}\right)$. 

Denote by $\rho:\mathcal{R}_{n}\to\mathbb{R}^{n}\circ\mathbb{R}^{n}$
the Ricci contraction $\left<\rho\left(R\right)v,w\right>=\sum_{i=1}^{n}\left<R\left(v\wedge e_{i}\right),w\wedge e_{i}\right>$,
for any orthonormal frame $\left\{ e_{1},\ldots,e_{n}\right\} $.
This contraction is an equivariant map under the natural actions of
$SO(n)$, thus its kernel, and its orthogonal complement, are both
invariant subspaces, with $\mathcal{I}\oplus\mathcal{RIC}_{0}$ being
the orthogonal complement of the kernel of $\rho$. Singer and Thorpe
also define a mapping $s:\mathbb{R}^{n}\circ\mathbb{R}^{n}\to\mathcal{R}_{n}$,
which is a right inverse of $\rho$, by 
\begin{eqnarray*}
s\left(T\right)_{\left(u_{1},u_{2}\right)}\left(u_{3}\right) & = & \frac{1}{n-2}\left(\left<u_{1},u_{3}\right>T\left(u_{2}\right)-\left<u_{2},u_{3}\right>T\left(u_{1}\right)+\left<T\left(u_{1}\right),u_{3}\right>u_{2}-\left<T\left(u_{2}\right),u_{3}\right>u_{1}\right)\\
 &  & -\frac{1}{\left(n-2\right)\left(n-1\right)}{\rm tr}\left(T\right)\left(\left<u_{1},u_{3}\right>u_{2}-\left<u_{2},u_{3}\right>u_{1}\right),
\end{eqnarray*}
or, more simply, given a frame $\left\{ e_{1},\ldots,e_{n}\right\} $
of eigenvectors of $T$, with corresponding eigenvalues $\left\{ \lambda_{i}\right\} $,
as an operator, 
\begin{eqnarray}
s\left(T\right)\left(e_{i}\wedge e_{j}\right) & = & \frac{1}{n-2}\left(\lambda_{i}+\lambda_{j}-\frac{1}{n-1}{\rm tr}\left(T\right)\right)e_{i}\wedge e_{j}.\label{eq:s(T)}
\end{eqnarray}
In dimension 4 this decomposition has a particularly simple form,
due to \cite{AHS,LeBrun}. In that dimension the Hodge star operator
$\star:\Lambda_{2}\left(\mathbb{R}^{4}\right)\to\Lambda_{2}\left(\mathbb{R}^{4}\right)$
is a curvature operator; in particular it is a basis of the space
$\mathcal{S}$ of operators orthogonal to $\ker\left(b\right)$. The
operator $\star$ is defined on any oriented orthonormal frame $\left\{ e_{1},e_{2},e_{3},e_{4}\right\} $
by $\star\left(e_{1}\wedge e_{2}\right)=e_{3}\wedge e_{4}$, $\star\left(e_{1}\wedge e_{3}\right)=-e_{2}\wedge e_{4}$,
and $\star\left(e_{1}\wedge e_{4}\right)=e_{2}\wedge e_{3}$. Since
$\star^{2}=Id$, it decomposes $\Lambda_{2}\left(\mathbb{R}^{4}\right)$
into two 3-dimensional subspaces $\Lambda_{2}^{+}\left(\mathbb{R}^{4}\right)$
and $\Lambda_{2}^{-}\left(\mathbb{R}^{4}\right)$ consisting of the
$\pm1$-eigenspaces of $\star$, 
\[
\Lambda_{2}^{+}\left(\mathbb{R}^{4}\right)={\rm Span}\left\{ e_{1}\wedge e_{2}+e_{3}\wedge e_{4},e_{1}\wedge e_{3}-e_{2}\wedge e_{4},e_{1}\wedge e_{4}+e_{2}\wedge e_{3}\right\} 
\]
 and 
\[
\Lambda_{2}^{-}\left(\mathbb{R}^{4}\right)={\rm Span}\left\{ e_{1}\wedge e_{2}-e_{3}\wedge e_{4},e_{1}\wedge e_{3}+e_{2}\wedge e_{4},e_{1}\wedge e_{4}-e_{2}\wedge e_{3}\right\} .
\]
 These spaces switch under a change of orientation. 
\begin{rem}
There is a similar decomposition for Kähler curvature operators \cite{Johnson},
but here we will continue to use the real decomposition, even for
Kähler manifolds. The Ricci map and Bianchi map remain the same in
\cite{Johnson} (the Ricci tensor of a Kähler manifold will of course
be complex-linear).
\end{rem}
\begin{lem}
If $T\in\mathbb{R}^{4}\circ\mathbb{R}^{4}$ has ${\rm tr}\left(T\right)=0$,
then $s\left(T\right):\Lambda_{2}^{+}\left(\mathbb{R}^{4}\right)\to\Lambda_{2}^{-}\left(\mathbb{R}^{4}\right)$.
\end{lem}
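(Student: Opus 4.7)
The plan is to diagonalize $s(T)$ using the eigenbasis of $T$ and then read off what happens on the standard basis of $\Lambda_2^+(\mathbb{R}^4)$ given in the excerpt.

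First, since $T \in \mathbb{R}^4\circ\mathbb{R}^4$ is symmetric, choose an orthonormal basis $\{e_1,e_2,e_3,e_4\}$ of eigenvectors of $T$ with eigenvalues $\lambda_1,\lambda_2,\lambda_3,\lambda_4$. After possibly replacing one basis vector by its negative (which changes neither the eigenvalues nor the spans $\Lambda_2^\pm$, since those are defined by the fixed orientation on $\mathbb{R}^4$), we may take this basis to be positively oriented. Substituting $n=4$ and $\operatorname{tr}(T)=\lambda_1+\lambda_2+\lambda_3+\lambda_4=0$ into formula~\eqref{eq:s(T)} yields
\begin{equation*}
s(T)(e_i\wedge e_j)=\tfrac{1}{2}(\lambda_i+\lambda_j)\,e_i\wedge e_j,
\end{equation*}
so $s(T)$ is diagonal in the basis $\{e_i\wedge e_j\}$ of $\Lambda_2(\mathbb{R}^4)$.

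Next, I would apply $s(T)$ to each of the three self-dual basis vectors. For instance,
\begin{equation*}
s(T)(e_1\wedge e_2+e_3\wedge e_4)=\tfrac{1}{2}(\lambda_1+\lambda_2)e_1\wedge e_2+\tfrac{1}{2}(\lambda_3+\lambda_4)e_3\wedge e_4.
\end{equation*}
The tracelessness assumption gives $\lambda_3+\lambda_4=-(\lambda_1+\lambda_2)$, so this equals $\tfrac{1}{2}(\lambda_1+\lambda_2)(e_1\wedge e_2-e_3\wedge e_4)$, which lies in $\Lambda_2^-(\mathbb{R}^4)$. The analogous computation for $e_1\wedge e_3-e_2\wedge e_4$ and $e_1\wedge e_4+e_2\wedge e_3$ is identical, using $\lambda_2+\lambda_4=-(\lambda_1+\lambda_3)$ and $\lambda_2+\lambda_3=-(\lambda_1+\lambda_4)$ respectively, and in each case produces the partner element of $\Lambda_2^-$.

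There is no genuine obstacle here; the only subtlety is the orientation of the eigenbasis, which is handled by the remark above. Conceptually, the computation shows that on each two-index pair, the eigenvalue of $s(T)$ on $e_i\wedge e_j$ is the negative of the eigenvalue on its Hodge-dual partner $\pm e_k\wedge e_l$, so $s(T)$ anticommutes with $\star$ and therefore exchanges the $\pm1$-eigenspaces. This gives $s(T)\bigl(\Lambda_2^+(\mathbb{R}^4)\bigr)\subseteq\Lambda_2^-(\mathbb{R}^4)$.
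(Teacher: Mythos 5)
Your proof is correct and takes essentially the same route as the paper, which simply states that the lemma ``follows immediately from equation (\ref{eq:s(T)})''; your computation is exactly the verification that remark leaves implicit, and your care with the orientation of the eigenbasis is the only point of any subtlety.
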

\begin{proof}
This follows immediately from equation (\ref{eq:s(T)}).
\end{proof}
If $W\in\mathcal{W}$, then $W$ decomposes as $W=W^{+}+W^{-}$, where
$W^{+}:\Lambda_{2}\left(\mathbb{R}^{4}\right)\to\Lambda_{2}^{+}\left(\mathbb{R}^{4}\right)$
and $W^{-}:\Lambda_{2}\left(\mathbb{R}^{4}\right)\to\Lambda_{2}^{-}\left(\mathbb{R}^{4}\right)$
are the orthogonal projections of $W$ onto the indicated subspaces.
Since $W^{\pm}\perp s\left(T\right)$ for any trace-free $T$, $W^{+}:\Lambda_{2}^{+}\left(\mathbb{R}^{4}\right)\to\Lambda_{2}^{+}\left(\mathbb{R}^{4}\right)$
and $W^{-}:\Lambda_{2}^{-}\left(\mathbb{R}^{4}\right)\to\Lambda_{2}^{-}\left(\mathbb{R}^{4}\right)$.
Again, the spaces $W^{\pm}$ switch under a change of orientation. 

Given any frame $\left\{ e_{1},e_{2},e_{3},e_{4}\right\} $, we will
use the \emph{adapted frame}, for any orthonormal frame $\left\{ e_{1},e_{2},e_{3},e_{4}\right\} $
of $\Lambda_{2}\left(\mathbb{R}^{4}\right)$, as
\begin{align*}
 & \left\{ \frac{1}{\sqrt{2}}\left(e_{1}\wedge e_{2}+e_{3}\wedge e_{4}\right),\frac{1}{\sqrt{2}}\left(e_{1}\wedge e_{3}-e_{2}\wedge e_{4}\right),\frac{1}{\sqrt{2}}\left(e_{1}\wedge e_{4}+e_{2}\wedge e_{3}\right),\right.\\
 & ,\sum_{j}a_{ij}^{2}=1\left.\frac{1}{\sqrt{2}}\left(e_{1}\wedge e_{2}-e_{3}\wedge e_{4}\right),\frac{1}{\sqrt{2}}\left(e_{1}\wedge e_{3}+e_{2}\wedge e_{4}\right),\frac{1}{\sqrt{2}}\left(e_{1}\wedge e_{4}-e_{2}\wedge e_{3}\right)\right\} .
\end{align*}
 Then, we have, 
\begin{prop}
\textbf{\emph{\label{=00005BSinger-Thorpe,-AHS=00005D}\cite{AHS,Singer-Thorpe,LeBrun}}}
Any $R\in\mathcal{R}_{4}$ satisfying the Bianchi identity decomposes
into block form, 
\begin{eqnarray*}
R & = & \left[\begin{array}{c|c}
W^{+}\left(R\right)+\frac{r}{12}Id & s\left(\rho\left(R\right)-\frac{r}{4}I\right)^{\pm}\\
\hline s\left(\rho\left(R\right)-\frac{r}{4}I\right)^{\mp} & W^{-}\left(R\right)+\frac{r}{12}Id
\end{array}\right]
\end{eqnarray*}
 with respect to the adapted frame of \emph{any} orthonormal frame
$\left\{ e_{1},\ldots,e_{4}\right\} $, where $r=2{\rm tr}\left(R\right)$
is the scalar curvature $r={\rm tr}\left(\rho\left(R\right)\right)$.
\end{prop}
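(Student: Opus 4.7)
The plan is to unwind the Singer--Thorpe decomposition $\mathcal{R}_4 = \mathcal{I} \oplus \mathcal{RIC}_0 \oplus \mathcal{W} \oplus \mathcal{S}$ explicitly for a curvature operator $R$ satisfying the Bianchi identity, and then to read off the block structure from the behavior of each summand under the splitting $\Lambda_2(\mathbb{R}^4) = \Lambda_2^+(\mathbb{R}^4) \oplus \Lambda_2^-(\mathbb{R}^4)$.

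First I would observe that $\mathcal{S}$ is defined as the orthogonal complement of $\ker(b)$, so the Bianchi identity $b(R) = 0$ kills the $\mathcal{S}$-component of $R$ outright; thus $R \in \mathcal{I} \oplus \mathcal{RIC}_0 \oplus \mathcal{W}$. Next I would isolate the scalar-curvature component: the $\mathcal{I}$-component of $R$ equals $\alpha \cdot Id$ where $\alpha$ is forced by the requirement that $R - \alpha \cdot Id$ be trace-free as an operator on $\Lambda_2(\mathbb{R}^4)$. Since $\dim \Lambda_2(\mathbb{R}^4) = 6$ and $\mathrm{tr}(R) = r/2$, this gives $\alpha = r/12$; equivalently, applying the eigenvalue formula (\ref{eq:s(T)}) to $T = \frac{r}{4} I$ yields $s(\frac{r}{4} I) = \frac{r}{12} Id$. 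The $\mathcal{RIC}_0$-component is then $s(\rho(R) - \frac{r}{4} I)$, because $\rho(R) - \frac{r}{4} I$ is trace-free and $s$ is a right inverse of $\rho$ whose image is exactly $\mathcal{I} \oplus \mathcal{RIC}_0$. What remains of $R$ is the Weyl component $W(R) = W^+(R) + W^-(R)$, so
\[
R \;=\; \frac{r}{12}\, Id \;+\; s\!\left(\rho(R) - \frac{r}{4} I\right) \;+\; W^+(R) + W^-(R).
\]

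Finally I would read off the blocks with respect to the adapted frame. The summand $\frac{r}{12} Id$ contributes $\frac{r}{12}$ to each diagonal block. By the lemma just established, $s(\rho(R) - \frac{r}{4} I)$ sends $\Lambda_2^+(\mathbb{R}^4)$ into $\Lambda_2^-(\mathbb{R}^4)$, and by symmetry of $s(T)$ its restriction to $\Lambda_2^-(\mathbb{R}^4)$ is the adjoint map into $\Lambda_2^+(\mathbb{R}^4)$, so this summand supplies exactly the off-diagonal blocks. As was noted just before the statement of the proposition, $W^+$ preserves $\Lambda_2^+(\mathbb{R}^4)$ and $W^-$ preserves $\Lambda_2^-(\mathbb{R}^4)$ (because $W^\pm \perp s(T)$ for every trace-free $T$), so they contribute to the top-left and bottom-right diagonal blocks respectively. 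Assembling these gives precisely the block form asserted.

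The main obstacle, such as it is, is purely combinatorial: pinning down the scalar coefficient $s(\frac{r}{4}I) = \frac{r}{12} Id$ and verifying the bookkeeping $\mathrm{tr}(R) = r/2$ so that the three summands genuinely reassemble $R$ and return $\rho(R)$ under the Ricci contraction. Once that scalar normalization is fixed, the block structure is an immediate consequence of the invariant-subspace properties of each component, and no curvature-specific computation is required.
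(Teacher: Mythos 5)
Your argument is correct and follows essentially the same route as the paper: both start from the decomposition $R=\frac{r}{12}\,Id+W^{+}\left(R\right)+W^{-}\left(R\right)+s\left(\rho\left(R\right)-\frac{r}{4}I\right)$ and read off the blocks from the invariance properties of each summand under the splitting $\Lambda_{2}^{+}\oplus\Lambda_{2}^{-}$. You simply spell out the details (the normalization $s\left(\frac{r}{4}I\right)=\frac{r}{12}Id$, the role of the Bianchi identity, and the mapping properties of $s$ and $W^{\pm}$) that the paper's one-line proof leaves implicit.
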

\begin{proof}
Since $R$ decomposes as $R=\frac{r}{12}Id+W\left(R\right)+s\left(\rho\left(R\right)-\frac{r}{4}I\right)=\frac{r}{12}Id+W^{+}\left(R\right)+W^{-}\left(R\right)+s\left(\rho\left(R\right)-\frac{r}{4}Id\right)$,
the result follows from the choice of frame for $\Lambda_{2}\left(\mathbb{R}^{4}\right)$.
\end{proof}
In terms of the components $\left\{ R_{ijkl}\right\} $, $W^{+}\left(R\right)+\frac{r}{12}Id$
is given by 
\[
\frac{1}{2}\left[\begin{array}{ccc}
\left(R_{1212}+R_{3434}+2R_{1234}\right) & \left(R_{1213}+R_{3413}\right)-\left(R_{1224}+R_{3424}\right) & \left(R_{1214}+R_{3414}\right)+\left(R_{1223}+R_{3423}\right)\\
\left(R_{1312}-R_{2412}\right)+\left(R_{1334}-R_{2434}\right) & \left(R_{1313}+R_{2424}-2R_{1324}\right) & \left(R_{1314}-R_{2414}\right)+\left(R_{1323}-R_{2423}\right)\\
\left(R_{1412}+R_{2312}\right)+\left(R_{1434}+R_{2334}\right) & \left(R_{1413}+R_{2313}\right)-\left(R_{1424}+R_{2324}\right) & \left(R_{1414}+R_{2323}+2R_{1423}\right)
\end{array}\right],
\]
$W^{-}\left(R\right)+\frac{r}{12}Id$ is given by 

\[
\frac{1}{2}\left[\begin{array}{ccc}
\left(R_{1212}+R_{3434}-2R_{1234}\right) & \left(R_{1213}-R_{3413}\right)+\left(R_{1224}-R_{3424}\right) & \left(R_{1214}-R_{3414}\right)-\left(R_{1223}-R_{3423}\right)\\
\left(R_{1312}+R_{2412}\right)-\left(R_{1334}+R_{2434}\right) & \left(R_{1313}+R_{2424}+2R_{1324}\right) & \left(R_{1314}+R_{2414}\right)-\left(R_{1323}+R_{2423}\right)\\
\left(R_{1412}-R_{2312}\right)-\left(R_{1434}-R_{2334}\right) & \left(R_{1413}-R_{2313}\right)+\left(R_{1424}-R_{2324}\right) & \left(R_{1414}+R_{2323}-2R_{1423}\right)
\end{array}\right],
\]
and $s\left(\rho\left(R\right)-\frac{r}{4}I\right)^{\pm}$ is given
by 
\begin{align*}
 & \frac{1}{2}\left[\begin{array}{ccc}
\left(R_{1212}-R_{3434}\right) & \left(R_{1213}+R_{3413}\right)+\left(R_{1224}+R_{3424}\right) & \left(R_{1214}+R_{3414}\right)-\left(R_{1223}+R_{3423}\right)\\
\left(R_{1312}-R_{2412}\right)-\left(R_{1334}-R_{2434}\right) & \left(R_{1313}-R_{2424}\right) & \left(R_{1314}-R_{2414}\right)-\left(R_{1323}-R_{2423}\right)\\
\left(R_{1412}+R_{2312}\right)-\left(R_{1434}+R_{2334}\right) & \left(R_{1413}+R_{2313}\right)+\left(R_{1424}+R_{2324}\right) & \left(R_{1414}-R_{2323}\right)
\end{array}\right]\\
= & \frac{1}{2}\left[\begin{array}{ccc}
\left(R_{1212}-R_{3434}\right) & \left(\rho\left(R\right)_{23}-\rho\left(R\right)_{14}\right) & \left(\rho\left(R\right)_{24}+\rho\left(R\right)_{13}\right)\\
\left(\rho\left(R\right)_{23}+\rho\left(R\right)_{14}\right) & \left(R_{1313}-R_{2424}\right) & \left(\rho\left(R\right)_{34}-\rho\left(R\right)_{12}\right)\\
\left(\rho\left(R\right)_{24}-\rho\left(R\right)_{13}\right) & \left(\rho\left(R\right)_{34}+\rho\left(R\right)_{12}\right) & \left(R_{1414}-R_{2323}\right)
\end{array}\right]
\end{align*}

\section{Kähler conditions}

If $M$ is a Kähler manifold, the complex-structure tensor $J$ is
an orthogonal transformation on each $T_{*}\left(M,m\right)\cong\mathbb{R}^{2k}$
satisfying $J^{2}=-Id$, so that $J$ is also skew-symmetric. Identifying
$o\left(2k\right)$ with $\Lambda_{2}\left(\mathbb{R}^{2k}\right)$,
$J$, denoted by $I\in\Lambda_{2}\left(\mathbb{R}^{2k}\right)$ to
avoid some confusion, is the metric dual of the Kähler form, $I=\sum_{i<j}a_{ij}e_{i}\wedge e_{j}$
with respect to an arbitrary frame. In dimension 4, the orientation
is consistent with the complex structure if $I\in\Lambda_{2}^{+}\left(\mathbb{R}^{4}\right)$,
so for any oriented frame $\left\{ e_{1},e_{2},e_{3},e_{4}\right\} $,
$I=a_{12}\left(e_{1}\wedge e_{2}+e_{3}\wedge e_{4}\right)+a_{13}\left(e_{1}\wedge e_{3}-e_{2}\wedge e_{4}\right)+a_{14}\left(e_{1}\wedge e_{4}+e_{2}\wedge e_{3}\right)$
with $a_{12}^{2}+a_{13}^{2}+a_{14}^{2}=1$. We will consistently use
this orientation; note that the opposite orientation is used in \cite{LeBrun}.
We can re-order the frame within that orientation so that $a_{12}>0$,
$a_{13}\geq0$ and $a_{14}\geq0$. Unless $a_{12}=1$, we can presume,
again by re-ordering, that $a_{13}>0$ as well. As an operator on
the tangent space, then, 
\begin{eqnarray*}
J\left(e_{1}\right) & = & a_{12}e_{2}+a_{13}e_{3}+a_{14}e_{4}\\
J\left(e_{2}\right) & = & -a_{12}e_{1}+a_{14}e_{3}-a_{13}e_{4}\\
J\left(e_{3}\right) & = & -a_{13}e_{1}-a_{14}e_{2}+a_{12}e_{4}\\
J\left(e_{4}\right) & = & -a_{14}e_{1}+a_{13}e_{2}-a_{12}e_{3}.
\end{eqnarray*}

\subsection{Kähler 4-manifold curvature.}

If $M^{4}$ is Hermitian, the complex structure tensor $J$ can be
extended to an algebraic curvature operator $J:\Lambda_{2}\left(\mathbb{R}^{4}\right)\to\Lambda_{2}\left(\mathbb{R}^{4}\right)$
by $J\left(v\wedge w\right):=J\left(v\right)\wedge J\left(w\right)$.
Like the Hodge star operator $\star$, $J$ is idempotent, $J^{2}=Id$.
For any orthonormal frame $\left\{ e_{1},e_{2},e_{3},e_{4}\right\} $,
\begin{eqnarray*}
J\left(e_{1}\wedge e_{2}-e_{3}\wedge e_{4}\right) & = & J\left(e_{1}\right)\wedge J\left(e_{2}\right)-J\left(e_{3}\right)\wedge J\left(e_{4}\right)\\
 & = & e_{1}\wedge e_{2}-e_{3}\wedge e_{4},
\end{eqnarray*}
etc., so $\left.J\right|_{\Lambda_{2}^{-}\left(\mathbb{R}^{4}\right)}:\Lambda_{2}^{-}\left(\mathbb{R}^{4}\right)\to\Lambda_{2}^{-}\left(\mathbb{R}^{4}\right)$
is the identity on $\Lambda_{2}^{-}\left(\mathbb{R}^{4}\right)$.
Also, $J\left(I\right)=I$. But, using a unitary frame it is trivial
to see that $J:\Lambda_{2}\left(\mathbb{R}^{4}\right)\to\Lambda_{2}\left(\mathbb{R}^{4}\right)$
has a 4-dimensional eigenspace for the eigenvalue $1$, and a 2-dimensional
eigenspace for the eigenvalue $-1$, so necessarily the orthogonal
complement of $I$ within $\Lambda_{2}^{+}\left(\mathbb{R}^{4}\right)$
is that $\left(-1\right)$-eigenspace, spanned by 
\[
{\textstyle \left\{ a_{13}\left(e_{1}\wedge e_{2}+e_{3}\wedge e_{4}\right)-a_{12}\left(e_{1}\wedge e_{3}-e_{2}\wedge e_{4}\right),a_{14}\left(e_{1}\wedge e_{2}+e_{3}\wedge e_{4}\right)-a_{12}\left(e_{1}\wedge e_{4}+e_{2}\wedge e_{3}\right)\right\} }.
\]

If $R$ is a curvature operator corresponding to a Kähler 4-manifold,
then $RJ=JR=R$, so that $\left.R\right|_{\Lambda_{2}^{+}\left(\mathbb{R}^{4}\right)}:\Lambda_{2}^{+}\left(\mathbb{R}^{4}\right)\to\Lambda_{2}\left(\mathbb{R}^{4}\right)$
has at least a two-dimensional kernel, containing
\begin{eqnarray*}
I^{\perp}\cap\Lambda_{2}^{+}\left(\mathbb{R}^{4}\right) & = & \left\{ \left.\xi\right|J\xi=-\xi\right\} \subset\Lambda_{2}^{+}\left(\mathbb{R}^{4}\right).
\end{eqnarray*}
 The conditions for $R$ to be Kähler then become simply
\begin{eqnarray*}
0 & = & R\left(a_{12}\left(e_{1}\wedge e_{3}-e_{2}\wedge e_{4}\right)-a_{13}\left(e_{1}\wedge e_{2}+e_{3}\wedge e_{4}\right)\right),\text{ and}\\
0 & = & R\left(a_{12}\left(e_{1}\wedge e_{4}+e_{2}\wedge e_{3}\right)-a_{14}\left(e_{1}\wedge e_{2}+e_{3}\wedge e_{4}\right)\right),
\end{eqnarray*}
 expanding to
\begin{eqnarray}
0 & = & a_{12}\left(\left(R_{1213}-R_{4243}\right)+\left(R_{2124}-R_{3134}\right)\right)-a_{13}\left(\left(R_{1212}+R_{3434}\right)+2R_{1234}\right)\label{eq:Kaehler Identites}\\
0 & = & a_{12}\left(\left(R_{1214}-R_{3234}\right)-\left(R_{2123}-R_{4143}\right)\right)-a_{14}\left(\left(R_{1212}+R_{3434}\right)+2R_{1234}\right)\nonumber \\
0 & = & a_{13}\left(\left(R_{1213}-R_{4243}\right)+\left(R_{2124}-R_{3134}\right)\right)-a_{12}\left(\left(R_{1313}+R_{2424}\right)-2R_{2413}\right)\nonumber \\
0 & = & a_{13}\left(\left(R_{1314}-R_{2324}\right)-\left(R_{4142}-R_{3132}\right)\right)-a_{14}\left(\left(R_{1313}+R_{2424}\right)-2R_{1324}\right)\nonumber \\
0 & = & a_{14}\left(\left(R_{1214}-R_{3234}\right)-\left(R_{2123}-R_{4143}\right)\right)-a_{12}\left(\left(R_{1414}+R_{2323}\right)+2R_{1423}\right)\nonumber \\
0 & = & a_{14}\left(\left(R_{1314}-R_{2324}\right)+\left(R_{3132}-R_{4142}\right)\right)-a_{13}\left(\left(R_{1414}+R_{2323}\right)+2R_{1423}\right)\nonumber \\
0 & = & a_{12}\left(\rho\left(R\right)_{23}+\rho\left(R\right)_{14}\right)-a_{13}\left(R_{1212}-R_{3434}\right)\nonumber \\
0 & = & a_{12}\left(\rho\left(R\right)_{24}-\rho\left(R\right)_{13}\right)-a_{14}\left(R_{1212}-R_{3434}\right)\nonumber \\
0 & = & a_{13}\left(\rho\left(R\right)_{23}-\rho\left(R\right)_{14}\right)-a_{12}\left(R_{1313}-R_{2424}\right)\nonumber \\
0 & = & a_{13}\left(\rho\left(R\right)_{34}+\rho\left(R\right)_{12}\right)-a_{14}\left(R_{1313}-R_{2424}\right)\nonumber \\
0 & = & a_{14}\left(\rho\left(R\right)_{24}+\rho\left(R\right)_{13}\right)-a_{12}\left(R_{1414}-R_{2323}\right)\nonumber \\
0 & = & a_{14}\left(\rho\left(R\right)_{34}-\rho\left(R\right)_{12}\right)-a_{13}\left(R_{1414}-R_{2323}\right).\nonumber 
\end{eqnarray}

These Kähler identities then imply that 
\begin{eqnarray*}
\frac{1}{a_{12}^{2}}\left(\left(R_{1212}+R_{3434}\right)+2R_{1234}\right) & = & \frac{1}{a_{13}^{2}}\left(\left(R_{1313}+R_{2424}\right)-2R_{2413}\right)=\frac{1}{a_{14}^{2}}\left(\left(R_{1414}+R_{2323}\right)+2R_{1423}\right),
\end{eqnarray*}
thus the scalar curvature satisfies 
\begin{eqnarray}
r & = & \frac{2}{a_{12}^{2}}\left(R_{1212}+R_{3434}+2R_{1234}\right)\label{eq: scalar curvature}\\
 & = & \frac{2}{a_{13}^{2}}\left(R_{1313}+R_{2424}-2R_{1324}\right)\nonumber \\
 & = & \frac{2}{a_{14}^{2}}\left(R_{1414}+R_{2323}+2R_{1423}\right).\nonumber 
\end{eqnarray}
Note that, if a complex-structure component vanishes, say $a_{14}=0$,
then also the associated curvature expression $\left(R_{1414}+R_{2323}+2R_{1423}\right)=0$
as well. If $M^{4}$ is Kähler, then for \emph{any} frame $\left\{ e_{1},e_{2},e_{3},e_{4}\right\} $
in the orientation given by the complex structure $J$, 
\begin{eqnarray*}
R & = & \left[\begin{array}{c|c}
W^{+}\left(R\right)+\frac{r}{12}Id & s\left(\rho\left(R\right)-\frac{r}{4}Id\right)^{\pm}\\
\hline s\left(\rho\left(R\right)-\frac{r}{4}Id\right)^{\mp} & W^{-}\left(R\right)+\frac{r}{12}Id
\end{array}\right]
\end{eqnarray*}
with respect to the adapted frame frame of $\Lambda_{2}\left(\mathbb{R}^{4}\right)$
as before. However, the Kähler conditions imply that the blocks $W^{+}\left(R\right)+\frac{r}{12}Id$,
$s\left(\rho\left(R\right)-\frac{r}{4}Id\right)^{\pm}$, and (the
transpose) $s\left(\rho\left(R\right)-\frac{r}{4}Id\right)^{\mp}$
are all of rank 1. The curvature tensor, using the adapted frame from
Proposition (\ref{=00005BSinger-Thorpe,-AHS=00005D}) becomes 
\begin{prop}
\label{Prop: Kaehler 4-fold curvature tensor}If $M^{4}$ is a Kähler
manifold with an orthonormal frame frame $\left\{ e_{1},e_{2},e_{3},e_{4}\right\} $
with the orientation given by the complex structure $J$, then with
the adapted frame of $\Lambda_{2}\left(\mathbb{R}^{4}\right)$, where
$r=2{\rm tr}\left(R\right)$ is the scalar curvature, the curvature
operator has the form 
\begin{eqnarray*}
R & = & \left[\begin{array}{c|c}
W^{+}\left(R\right)+\frac{r}{12}Id & s\left(\rho\left(R\right)-\frac{r}{4}Id\right)^{\pm}\\
\hline s\left(\rho\left(R\right)-\frac{r}{4}Id\right)^{\mp} & W^{-}\left(R\right)+\frac{r}{12}Id
\end{array}\right]
\end{eqnarray*}
with 
\begin{eqnarray*}
s\left(\rho\left(R\right)-\frac{r}{4}Id\right)^{\pm} & = & \frac{1}{2}\left[\begin{array}{ccc}
a_{12} & 0 & 0\\
a_{13} & 0 & 0\\
a_{14} & 0 & 0
\end{array}\right]\left[\begin{array}{ccc}
\frac{1}{a_{12}}\left(R_{1212}-R_{3434}\right) & \frac{1}{a_{13}}\left(R_{1313}-R_{2424}\right) & \frac{1}{a_{14}}\left(R_{1414}-R_{2323}\right)\\
0 & 0 & 0\\
0 & 0 & 0
\end{array}\right],
\end{eqnarray*}
\begin{eqnarray*}
W^{+}\left(R\right)+\frac{r}{12}Id & = & \frac{r}{4}\left[\begin{array}{ccc}
a_{12} & 0 & 0\\
a_{13} & 0 & 0\\
a_{14} & 0 & 0
\end{array}\right]\left[\begin{array}{ccc}
a_{12} & a_{13} & a_{14}\\
0 & 0 & 0\\
0 & 0 & 0
\end{array}\right],
\end{eqnarray*}
 and 
\begin{eqnarray*}
W^{-}\left(R\right)+\frac{r}{12}Id & = & \frac{r}{4}\left[\begin{array}{ccc}
a_{12} & 0 & 0\\
a_{13} & 0 & 0\\
a_{14} & 0 & 0
\end{array}\right]\left[\begin{array}{ccc}
a_{12} & a_{13} & a_{14}\\
0 & 0 & 0\\
0 & 0 & 0
\end{array}\right]\\
 &  & +\left[\begin{array}{ccc}
-2R_{1234} & -\left(R_{2124}-R_{3134}\right) & \left(R_{2123}-R_{4143}\right)\\
-\left(R_{2124}-R_{3134}\right) & 2R_{2413} & -\left(R_{3132}-R_{4142}\right)\\
\left(R_{2123}-R_{4143}\right) & -\left(R_{3132}-R_{4142}\right) & -2R_{1423}
\end{array}\right].
\end{eqnarray*}
\end{prop}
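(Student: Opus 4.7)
The plan is to start from the Singer-Thorpe block decomposition given by Proposition \ref{=00005BSinger-Thorpe,-AHS=00005D} (namely the three matrices of curvature components written out immediately after it) and to substitute the K\"ahler identities (\ref{eq:Kaehler Identites}) together with the scalar-curvature formulas (\ref{eq: scalar curvature}) into each entry.

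First I would treat $W^{+}(R)+\frac{r}{12}Id$. Its three diagonal Singer-Thorpe entries are exactly $\frac{1}{2}(R_{1212}+R_{3434}+2R_{1234})$, $\frac{1}{2}(R_{1313}+R_{2424}-2R_{1324})$, and $\frac{1}{2}(R_{1414}+R_{2323}+2R_{1423})$, so by (\ref{eq: scalar curvature}) they reduce to $\frac{r}{4}a_{12}^{2}$, $\frac{r}{4}a_{13}^{2}$, and $\frac{r}{4}a_{14}^{2}$. For each of the six off-diagonal entries I would pair it with one of the first six identities of (\ref{eq:Kaehler Identites}): for example combining the first K\"ahler identity with (\ref{eq: scalar curvature}) gives $(R_{1213}-R_{4243})+(R_{2124}-R_{3134})=\frac{r}{2}a_{12}a_{13}$, and after applying the pair and skew symmetries $R_{4243}=R_{3424}$, $R_{2124}=-R_{1224}$, $R_{3134}=-R_{3413}$ this rewrites as $(R_{1213}+R_{3413})-(R_{1224}+R_{3424})=\frac{r}{2}a_{12}a_{13}$, so the Singer-Thorpe $(1,2)$ entry equals $\frac{r}{4}a_{12}a_{13}$. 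The other five off-diagonal entries yield the analogous products $\frac{r}{4}a_{1i}a_{1j}$, establishing the claimed rank-one outer product.

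For $s(\rho(R)-\frac{r}{4}Id)^{\pm}$, the Singer-Thorpe block already has diagonals $\frac{1}{2}(R_{1212}-R_{3434})$, $\frac{1}{2}(R_{1313}-R_{2424})$, $\frac{1}{2}(R_{1414}-R_{2323})$ and off-diagonals expressed in terms of Ricci components. The last six K\"ahler identities of (\ref{eq:Kaehler Identites}) express each Ricci off-diagonal as $\frac{a_{1k}}{a_{1j}}$ times the appropriate diagonal difference, which is precisely the rank-one factorization written in the statement.

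For $W^{-}(R)+\frac{r}{12}Id$ I would subtract the rank-one piece $\frac{r}{4}a_{1i}a_{1j}$ just established from each Singer-Thorpe entry. The diagonal reductions are immediate: $\frac{1}{2}(R_{1212}+R_{3434}-2R_{1234})-\frac{r}{4}a_{12}^{2}=-2R_{1234}$, and the other two diagonals give $2R_{1324}=2R_{2413}$ and $-2R_{1423}$. For the off-diagonals, subtracting the $W^{+}$ Singer-Thorpe $(1,2)$ entry from the $W^{-}$ Singer-Thorpe $(1,2)$ entry and using the same symmetries gives $R_{1224}-R_{3413}=-(R_{2124}-R_{3134})$, with analogous cancellations producing $(R_{2123}-R_{4143})$ and $-(R_{3132}-R_{4142})$ in the $(1,3)$ and $(2,3)$ positions. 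The main obstacle is purely bookkeeping: each Singer-Thorpe off-diagonal entry is a linear combination of four curvature components, and one must carefully match it to the correct K\"ahler identity and the symmetries $R_{ijkl}=R_{klij}=-R_{jikl}$; once the pairing is found, every simplification is immediate.
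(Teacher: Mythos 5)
Your proposal is correct and is essentially the paper's own proof: the paper simply states that ``the K\"ahler identities, equation (\ref{eq:Kaehler Identites}), easily give these expressions,'' and what you have written out is exactly that substitution of (\ref{eq:Kaehler Identites}) and (\ref{eq: scalar curvature}) into the explicit Singer--Thorpe component matrices following Proposition \ref{=00005BSinger-Thorpe,-AHS=00005D}, with the curvature symmetries $R_{ijkl}=R_{klij}=-R_{jikl}$ used to match terms. Your entry-by-entry checks (e.g.\ $(R_{1213}-R_{4243})+(R_{2124}-R_{3134})=(R_{1213}+R_{3413})-(R_{1224}+R_{3424})=\tfrac{r}{2}a_{12}a_{13}$, and the reduction of the $W^{-}$ off-diagonals to $R_{1224}-R_{3413}=-(R_{2124}-R_{3134})$) are accurate and supply the bookkeeping the paper leaves implicit.
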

\begin{proof}
The Kähler identities, equation (\ref{eq:Kaehler Identites}) easily
give these expressions.
\end{proof}
In particular, the two components $W^{+}\left(R\right)$ and $W^{-}\left(R\right)$
of the Weyl tensor, which are the self-dual, and anti-self-dual components,
are given by 
\begin{eqnarray*}
W^{+}\left(R\right) & = & \frac{r}{4}\left[\begin{array}{ccc}
\left(a_{12}^{2}-\frac{1}{3}\right) & a_{12}a_{13} & a_{12}a_{14}\\
a_{12}a_{13} & \left(a_{13}^{2}-\frac{1}{3}\right) & a_{13}a_{14}\\
a_{12}a_{14} & a_{13}a_{14} & \left(a_{14}^{2}-\frac{1}{3}\right)
\end{array}\right]
\end{eqnarray*}
and 
\begin{eqnarray*}
W^{-}\left(R\right) & = & \frac{r}{4}\left[\begin{array}{ccc}
\left(a_{12}^{2}-\frac{1}{3}\right) & a_{12}a_{13} & a_{12}a_{14}\\
a_{12}a_{13} & \left(a_{13}^{2}-\frac{1}{3}\right) & a_{13}a_{14}\\
a_{12}a_{14} & a_{13}a_{14} & \left(a_{14}^{2}-\frac{1}{3}\right)
\end{array}\right]\\
 &  & +\left[\begin{array}{ccc}
-2R_{1234} & -\left(R_{2124}-R_{3134}\right) & \left(R_{2123}-R_{4143}\right)\\
-\left(R_{2124}-R_{3134}\right) & 2R_{2413} & -\left(R_{3132}-R_{4142}\right)\\
\left(R_{2123}-R_{4143}\right) & -\left(R_{3132}-R_{4142}\right) & -2R_{1423}
\end{array}\right]
\end{eqnarray*}
This expression for $W^{+}\left(R\right)$ gives the result of LeBrun
\cite{LeBrun} that any anti-self-dual Kähler 4-manifold (with the
orientation given by the complex structure) must have vanishing scalar
curvature.

\section{Orthogonal coordinates with Kähler 4-manifolds}

Now, what happens when there are orthogonal coordinates? In that case,
as above, necessarily $R_{ijkl}=0$ when all indices are distinct,
that is, $R_{1234}=R_{1324}=R_{1423}=0$, which simplify the Kähler
identities, equations (\ref{eq:Kaehler Identites}), somewhat.

The Kähler condition $\nabla J=0$, or $\nabla_{X}JY=J\nabla_{X}Y$,
gives some specialized formulas as well, which will be used in a special
case. Using the formulas in Proposition (\ref{GM-proposition}) for
the covariant derivatives in orthogonal coordinates,
\begin{eqnarray}
\nabla_{e_{1}}Je_{1} & = & J\nabla_{e_{1}}e_{1}\implies\label{eq:Kaehler conditions}\\
a_{1}e_{1}\left(a_{12}\right) & = & a_{14}e_{3}\left(a_{1}\right)-a_{13}e_{4}\left(a_{1}\right)\nonumber \\
a_{1}e_{1}\left(a_{13}\right) & = & -a_{14}e_{2}\left(a_{1}\right)+a_{12}e_{4}\left(a_{1}\right)\nonumber \\
a_{1}e_{1}\left(a_{14}\right) & = & a_{13}e_{2}\left(a_{1}\right)-a_{12}e_{3}\left(a_{1}\right)\nonumber 
\end{eqnarray}
\begin{eqnarray*}
\nabla_{e_{2}}Je_{1} & = & J\nabla_{e_{2}}e_{1}\implies\\
a_{2}e_{2}\left(a_{12}\right) & = & -a_{13}e_{3}\left(a_{2}\right)-a_{14}e_{4}\left(a_{2}\right)\\
a_{2}e_{2}\left(a_{13}\right) & = & a_{14}e_{1}\left(a_{2}\right)+a_{12}e_{3}\left(a_{2}\right)\\
a_{2}e_{2}\left(a_{14}\right) & = & -a_{13}e_{1}\left(a_{2}\right)+a_{12}e_{4}\left(a_{2}\right)
\end{eqnarray*}
\begin{eqnarray*}
\nabla_{e_{3}}Je_{1} & = & J\nabla_{e_{3}}e_{1}\implies\\
a_{3}e_{3}\left(a_{12}\right) & = & a_{13}e_{2}\left(a_{3}\right)-a_{14}e_{1}\left(a_{3}\right)\\
a_{3}e_{3}\left(a_{13}\right) & = & -a_{12}e_{2}\left(a_{3}\right)-a_{14}e_{4}\left(a_{3}\right)\\
a_{3}e_{3}\left(a_{14}\right) & = & a_{13}e_{4}\left(a_{3}\right)+a_{12}e_{1}\left(a_{3}\right)
\end{eqnarray*}
\begin{eqnarray*}
\nabla_{e_{4}}Je_{1} & = & J\nabla_{e_{4}}e_{1}\implies\\
a_{4}e_{4}\left(a_{12}\right) & = & a_{14}e_{2}\left(a_{4}\right)+a_{13}e_{1}\left(a_{4}\right)\\
a_{4}e_{4}\left(a_{13}\right) & = & a_{14}e_{3}\left(a_{4}\right)-a_{12}e_{1}\left(a_{4}\right)\\
a_{4}e_{4}\left(a_{14}\right) & = & -a_{12}e_{2}\left(a_{4}\right)-a_{13}e_{3}\left(a_{4}\right).
\end{eqnarray*}

\section{Obstructions}

If $M^{2m}$ admits orthogonal coordinates which are unitary, that
is, for some ordering of the coordinates $\left\{ x_{1},\ldots,x_{2m}\right\} $,
the frame $\left\{ e_{1},\ldots,e_{2m}\right\} $ $e_{i}=\frac{1}{a_{i}}\frac{\partial}{\partial x_{i}}$
satisfies $Je_{2k+1}=e_{2k+2}$ and $Je_{2k+2}=-e_{2k+1}$, then the
Kähler conditions will easily show that $M$ must be locally a Riemannian
product of Riemann surfaces.
\begin{prop}
If $M^{2m}$ is a Kähler 2m-manifold supporting orthogonal coordinates
that are unitary, then $M$ is locally a Riemannian product of Riemann
surfaces.
\end{prop}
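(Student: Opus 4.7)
The plan is to apply the K\"ahler condition $\nabla J=0$ directly to each unitary frame vector and read off where the coefficients $a_{i}$ are allowed to vary. Fix a pair of indices $(2k+1,2k+2)$. Using part (1) of Proposition \ref{GM-proposition},
\[
\nabla_{e_{2k+1}}\left(Je_{2k+1}\right)=\nabla_{e_{2k+1}}e_{2k+2}=\frac{1}{a_{2k+1}}e_{2k+2}(a_{2k+1})\,e_{2k+1},
\]
while part (2) gives
\[
J\left(\nabla_{e_{2k+1}}e_{2k+1}\right)=-\frac{1}{a_{2k+1}}\sum_{j\neq 2k+1}e_{j}(a_{2k+1})\,Je_{j}.
\]

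The key point is that, because the frame is unitary, $Je_{j}$ for $j\notin\{2k+1,2k+2\}$ is $\pm$ another frame vector $e_{j'}$ with $j'\notin\{2k+1,2k+2\}$. Equating $\nabla_{e_{2k+1}}(Je_{2k+1})=J\nabla_{e_{2k+1}}e_{2k+1}$, the $e_{2k+1}$-component coming from the $j=2k+2$ summand on the right matches the left-hand side identically, and linear independence of the remaining frame vectors then forces $e_{j}(a_{2k+1})=0$ for every $j\notin\{2k+1,2k+2\}$. The parallel computation starting from $\nabla_{e_{2k+2}}(Je_{2k+2})=-\nabla_{e_{2k+2}}e_{2k+1}$ yields $e_{j}(a_{2k+2})=0$ for every $j\notin\{2k+1,2k+2\}$. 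Hence each of $a_{2k+1}$ and $a_{2k+2}$ depends only on $x_{2k+1}$ and $x_{2k+2}$.

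With this in hand, the metric separates as
\[
g=\sum_{k=0}^{m-1}\left(a_{2k+1}^{2}\,dx_{2k+1}^{2}+a_{2k+2}^{2}\,dx_{2k+2}^{2}\right),
\]
with the $k$th summand a function only of $(x_{2k+1},x_{2k+2})$. This is the standard local expression of a product metric, so $(U,g)$ is locally isometric to the Riemannian product of the two-dimensional surfaces $\Sigma_{k}$ carrying the metric $a_{2k+1}^{2}dx_{2k+1}^{2}+a_{2k+2}^{2}dx_{2k+2}^{2}$. Since $Je_{2k+1}=e_{2k+2}$, the complex structure $J$ preserves each coordinate $2$-plane $\mathrm{span}\{\partial/\partial x_{2k+1},\partial/\partial x_{2k+2}\}$, so it restricts to each factor $\Sigma_{k}$ as an orthogonal almost complex structure; each $\Sigma_{k}$ is therefore a Riemann surface.

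I expect no serious obstacle: the whole argument reduces to combining the single identity $\nabla J=0$ with parts (1) and (2) of Proposition \ref{GM-proposition}. The one place that demands attention is the observation that unitarity makes $J$ permute the frame vectors (with signs) compatibly with the pair decomposition, which is precisely what allows the linear independence of $\{e_{j}\}$ to convert a vector identity in the tangent bundle into the scalar dependence statement $e_{j}(a_{2k+1})=0$. The passage from the diagonal metric to a genuine local product is then the standard remark that a diagonal Riemannian metric whose blocks each depend only on their own coordinates is the product metric of those block pieces.
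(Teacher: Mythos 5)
Your proof is correct and takes essentially the same route as the paper's: both apply $\nabla_{e_{2k+1}}(Je_{2k+1})=J\nabla_{e_{2k+1}}e_{2k+1}$ together with parts (1) and (2) of Proposition \ref{GM-proposition} to obtain $\sum_{j\neq 2k+1,2k+2}e_{j}(a_{2k+1})Je_{j}=0$, deduce that each $a_{i}$ depends only on its own block's coordinates, and conclude the local product structure. The only cosmetic difference is that the paper packages the final step in terms of the mutually orthogonal, integrable, totally geodesic distributions $\mathcal{D}_{k}={\rm Span}\{e_{2k+1},e_{2k+2}\}$, whereas you read the splitting off directly from the block-diagonal form of the metric.
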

\begin{proof}
Assume that $M$ has orthogonal coordinates that are unitary as above.
The distribution $\mathcal{D}_{k}:={\rm Span}\left\{ e_{2k+1},e_{2k+2}\right\} $,
which is of course integrable, also satisfies $J\mathcal{D}_{k}=\mathcal{D}_{k}$,
since both $\nabla_{e_{2k+1}}e_{2k+2}=\frac{1}{a_{2k+1}}e_{2k+2}\left(a_{2k+1}\right)e_{2k+1}$
and $\nabla_{e_{2k+2}}e_{2k+1}=\frac{1}{a_{2k+2}}e_{2k+1}\left(a_{2k+2}\right)e_{2k+2}$
are in $\mathcal{D}_{k}$, and
\begin{eqnarray*}
J\left(\nabla_{e_{2k+1}}e_{2k+1}\right) & = & \nabla_{e_{2k+1}}Je_{2k+1}\\
 & = & \nabla_{e_{2k+1}}e_{2k+2}\\
 & \in & \mathcal{D}_{k},
\end{eqnarray*}
so $\nabla_{e_{2k+1}}e_{2k+1}\in\mathcal{D}_{k}$, as is $\nabla_{e_{2k+2}}e_{2k+2}$,
thus $\mathcal{D}_{k}$ is totally geodesic. In addition, 
\begin{eqnarray*}
J\left(\nabla_{e_{2k+1}}e_{2k+1}\right) & = & \nabla_{e_{2k+1}}e_{2k+2}\implies\\
-\sum_{j\neq2k+1}\frac{1}{a_{2k+1}}e_{j}\left(a_{2k+1}\right)Je_{j} & = & \frac{1}{a_{2k+1}}e_{2k+2}\left(a_{2k+1}\right)e_{2k+1}\implies\\
\sum_{j\neq2k+1,2k+2}\frac{1}{a_{2k+1}}e_{j}\left(a_{2k+1}\right)Je_{j} & = & 0,
\end{eqnarray*}
so $a_{2k+1}$depends only on $x_{2k+1}$ and $x_{2k+2}$, and similarly
$a_{2k+2}$depends only on $x_{2k+1}$ and $x_{2k+2}$. Since $\mathcal{D}_{k}\perp\mathcal{D}_{l}$,
the manifold is locally a Riemannian product of Riemann surfaces. 
\end{proof}
Assume in the following that that $M$ is a 4-real-dimensional Kähler
manifold, which supports orthogonal coordinates. Then, with respect
to an oriented frame $\left\{ e_{1},e_{2},e_{3},e_{4}\right\} $ associated
to an orthogonal coordinate chart, the following restrictions apply.
\begin{prop}
If $M$ is a 4-dimensional Kähler manifold supporting orthogonal coordinates,
then, for any frame $\left\{ e_{1},\ldots,e_{4}\right\} $ arising
from orthogonal coordinates, the scalar curvature $r$ satisfies
\begin{eqnarray*}
r & = & \frac{2}{a_{12}^{2}}\left(R_{1212}+R_{3434}\right)=\frac{2}{a_{13}^{2}}\left(R_{1313}+R_{2424}\right)=\frac{2}{a_{14}^{2}}\left(R_{1414}+R_{2323}\right).
\end{eqnarray*}
$\left(R_{1212}+R_{3434}\right),\,\left(R_{1313}+R_{2424}\right),\text{ and }\left(R_{1414}+R_{2323}\right)$
thus all have the same sign. 
\end{prop}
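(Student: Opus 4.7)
The plan is to reduce the proposition to a direct substitution into the three expressions for the scalar curvature already derived in equation (\ref{eq: scalar curvature}). Two ingredients suffice: the K\"ahler scalar-curvature identity (\ref{eq: scalar curvature}), valid for any oriented orthonormal frame compatible with the complex structure, and Proposition \ref{GM-proposition}(4), which says that in an orthogonal-coordinate frame $R_{ijkl}$ vanishes whenever $i,j,k,l$ are all distinct.

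First I would apply Proposition \ref{GM-proposition}(4) to the associated frame $\{e_1,\ldots,e_4\}$ of the orthogonal chart to conclude
\begin{equation*}
R_{1234}=R_{1324}=R_{1423}=0,
\end{equation*}
since each of these is a curvature component with four distinct indices. Substituting these three vanishings into the three lines of (\ref{eq: scalar curvature}) kills precisely the cross terms $2R_{1234}$, $-2R_{1324}$, and $2R_{1423}$, producing the three claimed equalities for $r$ at once.

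For the common-sign claim, I would exploit the normalization $a_{12}^2+a_{13}^2+a_{14}^2=1$ of the K\"ahler-form components, which guarantees that at least one $a_{1j}$ is nonzero and so pins down a definite value of $r$. Rewriting the three identities as
\begin{equation*}
R_{1212}+R_{3434}=\tfrac{r}{2}a_{12}^2,\qquad R_{1313}+R_{2424}=\tfrac{r}{2}a_{13}^2,\qquad R_{1414}+R_{2323}=\tfrac{r}{2}a_{14}^2,
\end{equation*}
each sum is a nonnegative real multiple of $r$ and therefore shares its sign, or vanishes exactly when the corresponding $a_{1j}=0$, consistent with the remark following (\ref{eq: scalar curvature}). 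I do not anticipate a genuine obstacle here: all of the substantive work has been placed in Proposition \ref{Prop: Kaehler 4-fold curvature tensor} and in Proposition \ref{GM-proposition}(4), and the present statement is essentially their immediate corollary under the orthogonal-coordinate hypothesis.
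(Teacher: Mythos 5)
Your proposal is correct and follows essentially the same route as the paper: the paper's proof likewise just substitutes the vanishing of $R_{1234}$, $R_{1324}$, $R_{1423}$ (from Proposition \ref{GM-proposition}(4)) into the scalar-curvature identities of equation (\ref{eq: scalar curvature}). Your additional observation that each sum equals $\tfrac{r}{2}a_{1j}^{2}$, a nonnegative multiple of $r$, correctly supplies the sign claim, which the paper leaves implicit.
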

\begin{proof}
The second line follows from equation (\ref{eq: scalar curvature}),
and the fact that $R_{1234}=R_{1324}=R_{1423}=0$. 
\end{proof}
The next result follows immediately from {[}\ref{Prop: Kaehler 4-fold curvature tensor}{]}
and the fact that $R_{1234}=R_{1324}=R_{1423}=0$. 
\begin{prop}
If $M^{4}$ is a Kähler manifold with orthogonal coordinates, then,
for a frame $\left\{ e_{1},e_{2},e_{3},e_{4}\right\} $ associated
to an orthogonal coordinate chart, the curvature is of the form 
\begin{eqnarray*}
R & = & \left[\begin{array}{c|c}
W^{+}\left(R\right)+\frac{r}{12}Id & s\left(\rho\left(R\right)-\frac{r}{4}I\right)^{\pm}\\
\hline s\left(\rho\left(R\right)-\frac{r}{4}I\right)^{\mp} & W^{-}\left(R\right)+\frac{r}{12}Id
\end{array}\right]
\end{eqnarray*}
with 
\begin{eqnarray*}
W^{-}\left(R\right) & = & \frac{r}{4}\left[\begin{array}{ccc}
\left(a_{12}^{2}-\frac{1}{3}\right) & a_{12}a_{13} & a_{12}a_{14}\\
a_{12}a_{13} & \left(a_{13}^{2}-\frac{1}{3}\right) & a_{13}a_{14}\\
a_{12}a_{14} & a_{13}a_{14} & \left(a_{14}^{2}-\frac{1}{3}\right)
\end{array}\right]\\
 &  & +\left[\begin{array}{ccc}
0 & -\left(R_{2124}-R_{3134}\right) & \left(R_{2123}-R_{4143}\right)\\
-\left(R_{2124}-R_{3134}\right) & 0 & -\left(R_{3132}-R_{4142}\right)\\
\left(R_{2123}-R_{4143}\right) & -\left(R_{3132}-R_{4142}\right) & 0
\end{array}\right].
\end{eqnarray*}
\end{prop}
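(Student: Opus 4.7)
The plan is to invoke the Kähler curvature formula of Proposition (\ref{Prop: Kaehler 4-fold curvature tensor}) directly, substituting in the vanishing conditions that orthogonal coordinates impose on the curvature components. No new geometric input is needed beyond what has already been collected.

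First I would appeal to part (4) of Proposition (\ref{GM-proposition}): for the frame $\{e_1, e_2, e_3, e_4\}$ associated to an orthogonal coordinate chart, $R_{ijkl} = 0$ whenever all four indices are distinct. In real dimension 4, the only curvature components with four distinct indices (up to the usual symmetries $R_{ijkl} = -R_{jikl} = -R_{ijlk} = R_{klij}$) are $R_{1234}$, $R_{1324}$, and $R_{1423}$, and these therefore vanish identically in our frame.

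Next I would take the expression for $W^{-}(R)$ given in Proposition (\ref{Prop: Kaehler 4-fold curvature tensor}) and substitute these zeros. The second $3 \times 3$ summand appearing in that expression has diagonal entries $-2R_{1234}$, $2R_{2413}$, and $-2R_{1423}$; using the pair-swap symmetry $R_{2413} = R_{1324}$, each of these three diagonal entries vanishes under the orthogonal-coordinate hypothesis. The off-diagonal entries involve only three distinct indices each and so are unaffected. The block decomposition of $R$ as a whole, together with the $\frac{r}{12} \mathrm{Id}$ contributions on the diagonal blocks and the rank-one pieces $W^{+}(R) + \frac{r}{12}\mathrm{Id}$ and $s(\rho(R) - \frac{r}{4}\mathrm{Id})^{\pm}$, carry over from the Kähler case unchanged, since none of those pieces depend on the four-distinct-index components.

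There is no real obstacle here: the argument is a literal substitution of three zero values into a $3 \times 3$ matrix whose form was derived in Proposition (\ref{Prop: Kaehler 4-fold curvature tensor}). The only subtlety worth flagging is the identification $R_{2413} = R_{1324}$ via pair-swap symmetry, so that the middle diagonal entry of the second summand is indeed among the vanishing four-distinct-index components; once this is noted, the stated expression for $W^{-}(R)$ drops out immediately.
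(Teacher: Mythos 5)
Your proposal is correct and follows exactly the paper's route: the paper likewise derives this proposition ``immediately'' from Proposition (\ref{Prop: Kaehler 4-fold curvature tensor}) together with the vanishing $R_{1234}=R_{1324}=R_{1423}=0$ supplied by part (4) of Proposition (\ref{GM-proposition}). Your explicit note that $R_{2413}=R_{1324}$ by pair-swap symmetry, so that all three diagonal entries of the second summand vanish, is a worthwhile detail the paper leaves implicit.
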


\subsection{Self-Dual Kähler 4-manifolds.}
\begin{thm}
If $M^{4}$ is a self-dual Kähler 4-manifold, then it cannot support
orthogonal coordinates unless it is flat, or is a product of two Riemann
surfaces of opposite constant curvature. 
\end{thm}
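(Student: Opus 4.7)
The plan is to apply the formula for $W^{-}(R)$ established in the proposition immediately preceding the theorem, together with the self-duality hypothesis $W^{-}(R)=0$. Since the perturbation matrix in that formula already has zero diagonal (the orthogonal-coordinate hypothesis gives $R_{1234}=R_{1324}=R_{1423}=0$), reading off the three diagonal entries of $W^{-}(R)=0$ immediately produces
\[
\tfrac{r}{4}\bigl(a_{12}^{2}-\tfrac{1}{3}\bigr)=\tfrac{r}{4}\bigl(a_{13}^{2}-\tfrac{1}{3}\bigr)=\tfrac{r}{4}\bigl(a_{14}^{2}-\tfrac{1}{3}\bigr)=0,
\]
so at each point of the chart, either $r=0$ or $a_{12}^{2}=a_{13}^{2}=a_{14}^{2}=\tfrac{1}{3}$. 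The proof splits into these two cases.

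On any open set where $r\equiv 0$, the K\"ahler-curvature formula of Proposition \ref{Prop: Kaehler 4-fold curvature tensor} forces $W^{+}(R)=0$ as well, so the full Weyl tensor vanishes and $M$ is conformally flat there. By Tanno's classification of conformally flat K\"ahler 4-manifolds \cite{Tanno}, such a neighborhood is locally isometric either to flat $\mathbb{R}^{4}$ or to a product of two Riemann surfaces of opposite constant curvature (whose scalar curvature is indeed zero), which is exactly the alternative allowed by the theorem.

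It remains to rule out the existence of a point where $r\neq 0$. On a neighborhood of any such point, $a_{12}^{2}=a_{13}^{2}=a_{14}^{2}=\tfrac{1}{3}$, so after a local sign choice and an orientation-preserving permutation of the frame we may normalize $a_{12}=a_{13}=a_{14}=1/\sqrt{3}$. Substituting these constants into the K\"ahler identities (\ref{eq:Kaehler conditions}) forces every left-hand side to vanish and produces a homogeneous linear system in the twelve derivatives $\{e_{i}(a_{j})\}_{i\neq j}$; solving the four blocks of three equations each shows that for every fixed $i$, the three numbers $\{e_{j}(a_{i})\}_{j\neq i}$ are determined up to sign by a single scalar $\alpha_{i}$.

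The closing step, which is the principal obstacle and follows the Gauduchon--Moroianu approach used for $\mathbb{CP}^{2}$, is to combine this linear reduction with the off-diagonal equations of $W^{-}(R)=0$, namely
\[
R_{2124}-R_{3134}=\tfrac{r}{12},\qquad R_{2123}-R_{4143}=-\tfrac{r}{12},\qquad R_{3132}-R_{4142}=\tfrac{r}{12},
\]
the explicit expression for $R_{e_{i}e_{j}}e_{i}$ in Proposition \ref{GM-proposition}(3), the compatibility $[e_{i},e_{j}](a_{k})=e_{i}e_{j}(a_{k})-e_{j}e_{i}(a_{k})$ applied against the bracket formula of Proposition \ref{GM-proposition}(1), and the Bianchi identity, to show that the resulting system of second-order relations on the $a_{i}$ is over-determined and forces $r=0$, contradicting the case hypothesis. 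The difficulty lies in carefully tracking the interaction between the constant-coefficient K\"ahler equations and the nonvanishing off-diagonal pieces of $W^{-}(R)$, since it is precisely this tension that must yield the contradiction.
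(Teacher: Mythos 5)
Your first case and the set-up of the second case coincide exactly with the paper's argument: the diagonal of $W^{-}(R)=0$ gives, pointwise, $r=0$ or $a_{12}^{2}=a_{13}^{2}=a_{14}^{2}=\frac{1}{3}$; when $r=0$ one gets $W^{+}(R)=0$ as well and Tanno's classification finishes; and in the remaining case the K\"ahler conditions (\ref{eq:Kaehler conditions}) with constant $a_{12}=a_{13}=a_{14}=\frac{1}{\sqrt{3}}$ collapse the twelve derivatives $e_{j}(a_{i})$, $j\neq i$, to four scalars $c_{1},\ldots,c_{4}$ with prescribed signs. Up to that point the proposal is sound.

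The gap is the closing step, which you describe as ``the principal obstacle'' but do not carry out, and for which you point at the wrong tools. The paper's argument needs neither the off-diagonal entries of $W^{-}(R)$, nor the expression for $R_{e_{i}e_{j}}e_{i}$, nor the Bianchi identity; no second-order relations on the $a_{i}$ enter at all. The key observation, taken from Gauduchon and Moroianu, is purely first order: the sign pattern of the reduction shows, for instance, that $\left(e_{2}-e_{3}\right)\left(a_{1}\right)=0$ and $\left(e_{2}-e_{4}\right)\left(a_{1}\right)=0$, and since two vector fields that annihilate a function have a Lie bracket that also annihilates it, $\left[e_{2}-e_{3},e_{2}-e_{4}\right]\left(a_{1}\right)=0$. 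Expanding this bracket with Proposition \ref{GM-proposition}(1) yields (up to a nonzero factor) $c_{1}\left(c_{2}+c_{3}+c_{4}\right)=0$, and the three analogous computations give $c_{2}\left(c_{1}+c_{3}-c_{4}\right)=0$, $c_{3}\left(c_{1}-c_{2}+c_{4}\right)=0$, and $c_{4}\left(c_{1}+c_{2}-c_{3}\right)=0$. A short rank argument on the resulting alternatives forces every $c_{j}=0$, whence each $a_{i}$ depends only on $x_{i}$ and the metric is flat, which is the allowed conclusion (and in particular $r=0$ after all). Without this bracket computation, or a worked-out substitute, your appeal to an ``over-determined system of second-order relations'' is an assertion rather than a proof, and the second case remains open.
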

\begin{rem}
See \cite{Derd} for a discussion of self-dual Kähler 4-manifolds.
\end{rem}
\begin{proof}
Assuming that $M$ is self-dual, with respect to a frame $\left\{ e_{1},e_{2},e_{3},e_{4}\right\} $
from an orthogonal coordinate chart,
\begin{eqnarray*}
0 & = & W^{-}\left(R\right)\\
 & = & \left[\begin{array}{ccc}
\frac{r}{4}\left(a_{12}^{2}-\frac{1}{3}\right) & -\left(R_{2124}-R_{3134}\right)+\frac{r}{4}a_{12}a_{13} & \left(R_{2123}-R_{4143}\right)+\frac{r}{4}a_{12}a_{14}\\
-\left(R_{2124}-R_{3134}\right)+\frac{r}{4}a_{12}a_{13} & \frac{r}{4}\left(a_{13}^{2}-\frac{1}{3}\right) & -\left(R_{3132}-R_{4142}\right)+\frac{r}{4}a_{13}a_{14}\\
\left(R_{2123}-R_{4143}\right)+\frac{r}{4}a_{12}a_{14} & -\left(R_{3132}-R_{4142}\right)+\frac{r}{4}a_{13}a_{14} & \frac{r}{4}\left(a_{14}^{2}-\frac{1}{3}\right)
\end{array}\right]
\end{eqnarray*}
so that, either $r=0$, or $a_{12}^{2}=a_{13}^{2}=a_{14}^{2}=\frac{1}{3}$.
In the first case, then also $W^{+}\left(R\right)=0$, since $r=0$
and $W^{+}\left(R\right)=\frac{r}{4}\Pi_{I}-\frac{r}{12}Id$. Thus,
the manifold must be conformally flat, thus by \cite{Tanno} must
be flat or a product of two Riemann surfaces with opposite constant
Gaussian curvatures. 

In the second case, then applying equations (\ref{eq:Kaehler conditions}),
in the special case $a_{12}=a_{13}=a_{14}=\frac{1}{\sqrt{3}}$. 
\begin{align}
c_{1} & := & \frac{1}{a_{1}}e_{2}\left(a_{1}\right) & = & \frac{1}{a_{1}}e_{3}\left(a_{1}\right) & = & \frac{1}{a_{1}}e_{4}\left(a_{1}\right)\nonumber \\
c_{2} & := & \frac{1}{a_{2}}e_{1}\left(a_{2}\right) & = & -\frac{1}{a_{2}}e_{3}\left(a_{2}\right) & = & \frac{1}{a_{2}}e_{4}\left(a_{2}\right)\nonumber \\
c_{3} & := & \frac{1}{a_{3}}e_{1}\left(a_{3}\right) & = & \frac{1}{a_{3}}e_{2}\left(a_{3}\right) & = & -\frac{1}{a_{3}}e_{4}\left(a_{3}\right)\nonumber \\
c_{4} & := & \frac{1}{a_{4}}e_{1}\left(a_{4}\right) & = & -\frac{1}{a_{4}}e_{2}\left(a_{4}\right) & = & \frac{1}{a_{4}}e_{3}\left(a_{4}\right)\label{eq: relations between derivatives of a_i in special case}
\end{align}
Add to that, following the clever argument of {[}\cite{GM}, p. 6{]},
that, since both $\left(e_{2}-e_{3}\right)\left(a_{1}\right)=0$ and
$\left(e_{2}-e_{4}\right)\left(a_{1}\right)=0$, that also $\left[e_{2}-e_{3},e_{2}-e_{4}\right]\left(a_{1}\right)=0$,
and since 
\begin{eqnarray*}
0 & = & \left[e_{2}-e_{3},e_{2}-e_{4}\right]\left(a_{1}\right)\\
 & = & -\left[e_{2},e_{4}\right]\left(a_{1}\right)+\left[e_{2},e_{3}\right]\left(a_{1}\right)+\left[e_{3},e_{4}\right]\left(a_{1}\right)\\
 & = & \frac{1}{a_{2}}\left(e_{3}-e_{4}\right)\left(a_{2}\right)e_{2}\left(a_{1}\right)+\frac{1}{a_{4}}\left(e_{2}-e_{3}\right)\left(a_{4}\right)e_{4}\left(a_{1}\right)+\frac{1}{a_{3}}\left(e_{4}-e_{2}\right)\left(a_{3}\right)e_{3}\left(a_{1}\right)\\
 & = & \frac{2}{a_{2}}e_{3}\left(a_{2}\right)e_{2}\left(a_{1}\right)+\frac{2}{a_{3}}e_{4}\left(a_{3}\right)e_{3}\left(a_{1}\right)+\frac{2}{a_{4}}e_{2}\left(a_{4}\right)e_{4}\left(a_{1}\right),
\end{eqnarray*}
then either $\left(\frac{2}{a_{2}}e_{3}\left(a_{2}\right)+\frac{2}{a_{3}}e_{4}\left(a_{3}\right)+\frac{2}{a_{4}}e_{2}\left(a_{4}\right)\right)=0$
or $e_{2}\left(a_{1}\right)=0$, or, in terms of $c_{j}$, either
$\left(c_{2}+c_{3}+c_{4}\right)=0$ or $c_{1}=0$.

Similarly, the second line in equations (\ref{eq: relations between derivatives of a_i in special case})
implies that $\left(e_{1}+e_{3}\right)\left(a_{2}\right)=0$ and $\left(e_{1}-e_{4}\right)\left(a_{2}\right)=0$,
so 
\begin{eqnarray*}
0 & = & \left[e_{1}+e_{3},e_{1}-e_{4}\right]\left(a_{2}\right)\\
 & = & \left(-\left[e_{1},e_{4}\right]-\left[e_{1},e_{3}\right]-\left[e_{3},e_{4}\right]\right)\left(a_{2}\right)\\
 & = & \left(-2\frac{1}{a_{1}}e_{3}\left(a_{1}\right)-2\frac{1}{a_{3}}e_{1}\left(a_{3}\right)+2\frac{1}{a_{4}}e_{1}\left(a_{4}\right)\right)e_{1}\left(a_{2}\right),
\end{eqnarray*}
so, either $\left(-2\frac{1}{a_{1}}e_{3}\left(a_{1}\right)-2\frac{1}{a_{3}}e_{1}\left(a_{3}\right)+2\frac{1}{a_{4}}e_{1}\left(a_{4}\right)\right)=0$
or $e_{1}\left(a_{2}\right)=0$; in terms of the $c_{j}$, either
$\left(c_{1}+c_{3}-c_{4}\right)=0$ or $c_{2}=0$. 

From the third line in equations (\ref{eq: relations between derivatives of a_i in special case}),
we have that $\left(e_{1}-e_{2}\right)\left(a_{3}\right)=0=\left(e_{1}+e_{4}\right)\left(a_{3}\right)$,
\begin{eqnarray*}
0 & = & \left[e_{1}-e_{2},e_{1}+e_{4}\right]\left(a_{3}\right)\\
 & = & \left[e_{1},e_{4}\right]\left(a_{3}\right)+\left[e_{1},e_{2}\right]\left(a_{3}\right)-\left[e_{2},e_{4}\right]\left(a_{3}\right)\\
 & = & \left(2\frac{1}{a_{1}}e_{2}\left(a_{1}\right)-2\frac{1}{a_{2}}e_{1}\left(a_{2}\right)+2\frac{1}{a_{4}}e_{1}\left(a_{4}\right)\right)e_{1}\left(a_{3}\right),
\end{eqnarray*}
so $c_{1}-c_{2}+c_{4}$ or $c_{3}=0$. Finally, since $\left(e_{1}+e_{2}\right)\left(a_{4}\right)=0=\left(e_{1}-e_{3}\right)\left(a_{4}\right)$,
\begin{eqnarray*}
0 & = & \left[e_{1}+e_{2},e_{1}-e_{3}\right]\left(a_{4}\right)\\
 & = & -\left[e_{1},e_{3}\right]\left(a_{4}\right)-\left[e_{1},e_{2}\right]\left(a_{4}\right)-\left[e_{2},e_{3}\right]\left(a_{4}\right)\\
 & = & \left(-2\frac{1}{a_{1}}e_{2}\left(a_{1}\right)-2\frac{1}{a_{2}}e_{1}\left(a_{2}\right)+2\frac{1}{a_{3}}e_{1}\left(a_{3}\right)\right)e_{1}\left(a_{4}\right).\\
\end{eqnarray*}
so $\left(c_{1}+c_{2}-c_{3}\right)=0$ or $c_{4}=0$. But then, either
\begin{eqnarray*}
\left[\begin{array}{cccc}
0 & 1 & 1 & 1\\
1 & 0 & 1 & -1\\
1 & -1 & 0 & 1\\
1 & 1 & -1 & 0
\end{array}\right]\left[\begin{array}{c}
c_{1}\\
c_{2}\\
c_{3}\\
c_{4}
\end{array}\right] & = & \left[\begin{array}{c}
0\\
0\\
0\\
0
\end{array}\right]
\end{eqnarray*}
or some of the $c_{i}=0$. If one entry vanishes, say $c_{1}=0$,
then unless another vanishes, 
\begin{eqnarray*}
\left[\begin{array}{ccc}
0 & 1 & -1\\
-1 & 0 & 1\\
1 & -1 & 0
\end{array}\right]\left[\begin{array}{c}
c_{2}\\
c_{3}\\
c_{4}
\end{array}\right] & = & \left[\begin{array}{c}
0\\
0\\
0
\end{array}\right],
\end{eqnarray*}
which again has maximal rank, so another entry must vanish, say $c_{2}$.
Continuing, necessarily all $c_{j}$ must vanish.
\end{proof}
This argument is the only occurrence, in this section, where we use
more than the condition that $R_{ijkl}=0$ for $i,j,k,l$ distinct,
in the obstructions to the existence of orthogonal coordinates. As
mentioned in \cite{GM}, embedded in the proof is the fact that the
algebraic condition $R_{ijkl}=0$ for $i,j,k,l$ distinct, for \emph{some}
frame, does not imply the existence of orthogonal coordinates, in
that:
\begin{example}
There is a frame $\left\{ e_{1},e_{2},e_{3},e_{4}\right\} $ of $\mathbb{CP}^{2}$
with the Fubini-Study metric for which $R_{ijkl}=0$ whenever $i,j,k,l$
are distinct.
\end{example}
\begin{proof}
Start with any unitary frame $\left\{ u_{1},u_{2},u_{3},u_{4}\right\} $
on $\mathbb{CP}^{2}$. Then set 
\begin{eqnarray*}
e_{1} & = & u_{1}\\
e_{2} & = & \frac{1}{\sqrt{3}}u_{2}+\frac{1}{\sqrt{2}}u_{3}+\frac{1}{\sqrt{6}}u_{4}\\
e_{3} & = & \frac{1}{\sqrt{3}}u_{2}-\frac{1}{\sqrt{2}}u_{3}+\frac{1}{\sqrt{6}}u_{4}\\
e_{4} & = & \frac{1}{\sqrt{3}}u_{2}-\frac{\sqrt{2}}{\sqrt{3}}u_{4}.
\end{eqnarray*}
 This clearly gives a frame $\left\{ e_{1},e_{2},e_{3},e_{4}\right\} $
on $\mathbb{CP}^{2}$, which satisfies the conditions $a_{12}=a_{13}=a_{14}=\frac{1}{\sqrt{3}}$
for the complex structure tensor with respect to that frame, and $R_{ijkl}=0$
whenever all indices are distinct.
\end{proof}

\subsection{Ricci flat}

A compact (real) 4-dimensional Kähler manifold that is Ricci-flat
must be a K3 surface, thus is a projective manifold \cite{Tosatti}.
Our final result shows that no nontrivial Kähler 4-manifold which
is Ricci-flat can support orthogonal coordinates. 
\begin{thm}
If $M^{4}$ is Kähler and Ricci-flat, it does not support orthogonal
coordinates unless it is flat.
\end{thm}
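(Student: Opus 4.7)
The plan is to show that the Riemann curvature tensor $R$ must vanish identically, so that $M$ is flat. The main tools are the K\"ahler curvature decomposition of Proposition~\ref{Prop: Kaehler 4-fold curvature tensor}, Ricci-flatness ($\rho(R) = 0$, hence $r = \mathrm{tr}(\rho(R)) = 0$), and the orthogonal-coordinate algebraic identities $R_{1234} = R_{1324} = R_{1423} = 0$ from Proposition~\ref{GM-proposition}(4).

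Starting from the decomposition, with $r = 0$ the rank-one scalar block vanishes, giving $W^{+}(R) = 0$; with $\rho = 0$ the off-diagonal blocks $s(\rho(R) - \frac{r}{4}\mathrm{Id})^{\pm}$ also vanish. So $R = W^{-}(R)$, a traceless symmetric operator on $\Lambda_{2}^{-}(\mathbb{R}^{4})$.

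Next I would combine the trailing six identities of (\ref{eq:Kaehler Identites}) --- which, with $\rho = 0$, reduce to $a_{ij}(R_{abab} - R_{cdcd}) = 0$ for complementary index pairs $\{a,b\},\{c,d\}$ --- with the K\"ahler scalar-curvature relation (\ref{eq: scalar curvature}) (which, with $r = 0$, gives $R_{abab} + R_{cdcd} = 0$), to force the diagonal Riemann components $R_{1212}, R_{3434}, R_{1313}, R_{2424}, R_{1414}, R_{2323}$ all to zero. This uses $a_{12} > 0$ together with at least one of $a_{13}, a_{14}$ positive; the remaining case $a_{12} = 1$ yields a unitary orthogonal frame, and the unitary-frame proposition then forces $M$ to be a local Riemannian product of Ricci-flat (hence flat) Riemann surfaces. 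In the nondegenerate case the diagonal of $W^{-}(R)$ in the adapted basis of $\Lambda_{2}^{-}(\mathbb{R}^{4})$ vanishes. The off-diagonal Ricci-flat conditions together with the K\"ahler--Ricci-flat consequence $R|_{\Lambda_{2}^{+}(\mathbb{R}^{4})} = 0$ then reduce all twelve three-distinct-index curvature components to just three independent parameters $R_{1213}, R_{1214}, R_{1314}$, which are (up to constant factors) the three off-diagonal entries of $W^{-}(R)$.

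The hardest step --- and the main obstacle --- is to show that these three parameters must themselves vanish. My plan here is to re-examine the leading six identities of (\ref{eq:Kaehler Identites}) in the present Ricci-flat setting, in combination with the specific vanishings already obtained, so as to extract a homogeneous linear system in $(R_{1213}, R_{1214}, R_{1314})$ with coefficient matrix depending on $(a_{12}, a_{13}, a_{14})$; the aim is to verify that this coefficient matrix has full rank under the nondegeneracy assumption $a_{12} > 0$ with at least one of $a_{13}, a_{14}$ positive, thereby forcing $R_{1213} = R_{1214} = R_{1314} = 0$. Once this is established, $W^{-}(R) = 0$, so $R \equiv 0$ and $M$ is flat.
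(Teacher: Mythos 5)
Your reduction is sound and tracks the paper's own strategy: $r=0$ and $\rho(R)=0$ kill the $W^{+}$ block and the off-diagonal blocks, the trailing six identities of (\ref{eq:Kaehler Identites}) together with (\ref{eq: scalar curvature}) and $R_{1234}=R_{1324}=R_{1423}=0$ kill the diagonal of $W^{-}(R)$ in the adapted basis, and the leading six identities cut the twelve three-distinct-index components down to $R_{1213},R_{1214},R_{1314}$; the degenerate case $a_{12}=1$ via the unitary-frame proposition is also handled correctly. The fatal problem is the step you yourself flag as the main obstacle. The leading six identities of (\ref{eq:Kaehler Identites}) cannot produce a full-rank system in $(R_{1213},R_{1214},R_{1314})$: under Ricci-flatness and the vanishing of the four-distinct-index components, those six identities collapse in pairs (with coefficients $a_{12},a_{13}$, resp.\ $a_{12},a_{14}$, resp.\ $a_{13},a_{14}$) to exactly the three relations $R_{1213}=R_{1224}$, $R_{1214}=-R_{1223}$, $R_{1314}=-R_{1323}$. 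That is, they \emph{are} the reduction to three parameters and impose nothing further on them; the coefficient matrix of the homogeneous system you hope to extract is the zero matrix.

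The obstruction is structural, not computational. Consider the algebraic operator $R$ that vanishes on $\Lambda_{2}^{+}\left(\mathbb{R}^{4}\right)$ and, in the adapted basis of $\Lambda_{2}^{-}\left(\mathbb{R}^{4}\right)$, equals $\bigl(\begin{smallmatrix}0&1&0\\1&0&0\\0&0&0\end{smallmatrix}\bigr)$. It is symmetric, orthogonal to $\mathcal{S}$ (hence satisfies the Bianchi identity), has $\rho(R)=0$, commutes with $J$ (its kernel contains the $(-1)$-eigenspace $I^{\perp}\cap\Lambda_{2}^{+}\left(\mathbb{R}^{4}\right)$ and its image lies in $\Lambda_{2}^{-}\left(\mathbb{R}^{4}\right)$), and has $R_{ijkl}=0$ for all distinct $i,j,k,l$, yet $R_{1213}=R_{1224}=\tfrac{1}{2}\neq0$. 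So \emph{no} pointwise-algebraic argument --- yours or any other using only (\ref{eq:Kaehler Identites}), (\ref{eq: scalar curvature}), Ricci-flatness, and Proposition \ref{GM-proposition}(4) --- can force the three remaining parameters to vanish; to finish you must inject differential information, e.g.\ the $\nabla J=0$ relations (\ref{eq:Kaehler conditions}), the second Bianchi identity, or a commutator argument in the style of the self-dual theorem. For the same reason you should scrutinize the paper's own concluding computation, which equates the $W^{-}$ entries with \emph{unnormalized} inner products $\left<R\left(e_{i}\wedge e_{j}\mp e_{k}\wedge e_{l}\right),\cdot\right>$; the adapted frame carries factors of $\tfrac{1}{\sqrt{2}}$, and restoring them turns the displayed conclusions $0=2R_{1213}$, etc., back into the already-known relations $R_{1213}=R_{1224}$, etc.
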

\begin{proof}
If the manifold supports orthogonal coordinates and is Ricci-flat,
then, with respected to an associated frame as above, noting that
Ricci flatness implies that $r=0$ and $\left(R_{2124}-R_{3134}\right)=2R_{2124}$,
etc., 
\begin{eqnarray*}
R & = & \left[\begin{array}{cc}
\left[\begin{array}{ccc}
0 & 0 & 0\\
0 & 0 & 0\\
0 & 0 & 0
\end{array}\right] & \left[\begin{array}{ccc}
0 & 0 & 0\\
0 & 0 & 0\\
0 & 0 & 0
\end{array}\right]\\
\left[\begin{array}{ccc}
0 & 0 & 0\\
0 & 0 & 0\\
0 & 0 & 0
\end{array}\right] & \left[\begin{array}{ccc}
0 & -2R_{2124} & 2R_{2123}\\
-2R_{2124} & 0 & -2R_{3132}\\
2R_{2123} & -2R_{3132} & 0
\end{array}\right]
\end{array}\right].
\end{eqnarray*}
Since $\rho\left(R\right)=0$, $W^{+}\left(R\right)+\frac{r}{12}Id=0,$
and $W^{+}\left(R\right)+\frac{r}{12}Id=\left[\begin{array}{ccc}
0 & -2R_{2124} & 2R_{2123}\\
-2R_{2124} & 0 & -2R_{3132}\\
2R_{2123} & -2R_{3132} & 0
\end{array}\right]$,
\begin{eqnarray*}
0 & = & \left<R\left(e_{1}\wedge e_{2}+e_{3}\wedge e_{4}\right),e_{1}\wedge e_{4}+e_{2}\wedge e_{3}\right>\\
 & = & R_{1214}+R_{1223}+R_{3414}+R_{3423}\\
 & = & 2\left(R_{1214}+R_{1223}\right)
\end{eqnarray*}
\begin{eqnarray*}
0 & = & \left<R\left(e_{1}\wedge e_{2}+e_{3}\wedge e_{4}\right),e_{1}\wedge e_{3}-e_{2}\wedge e_{4}\right>\\
 & = & R_{1213}-R_{1224}+R_{3413}-R_{3424}\\
 & = & 2\left(R_{1213}-R_{1224}\right)
\end{eqnarray*}
\begin{eqnarray*}
0 & = & \left<R\left(e_{1}\wedge e_{3}-e_{2}\wedge e_{4}\right),e_{1}\wedge e_{4}+e_{2}\wedge e_{3}\right>\\
 & = & R_{1314}+R_{1323}-R_{2414}-R_{2423}\\
 & = & 2\left(R_{1314}+R_{1323}\right)
\end{eqnarray*}
and
\begin{eqnarray*}
-2R_{2124} & = & \left<R\left(e_{1}\wedge e_{2}-e_{3}\wedge e_{4}\right),e_{1}\wedge e_{3}+e_{2}\wedge e_{4}\right>\\
2R_{1224} & = & R_{1213}+R_{1224}-R_{3413}-R_{3424}\\
0 & = & R_{1213}-R_{1224}-R_{3413}-R_{3424}\\
0 & = & 2R_{1213}
\end{eqnarray*}
\begin{eqnarray*}
2R_{2123} & = & \left<R\left(e_{1}\wedge e_{2}-e_{3}\wedge e_{4}\right),e_{1}\wedge e_{4}-e_{2}\wedge e_{3}\right>\\
-2R_{1223} & = & R_{1214}-R_{1223}-R_{3414}+R_{3423}\\
0 & = & R_{1214}+R_{1223}-R_{3414}+R_{3423}\\
0 & = & 2R_{1214}
\end{eqnarray*}
\begin{eqnarray*}
-2R_{3132} & = & \left<R\left(e_{1}\wedge e_{3}+e_{2}\wedge e_{4}\right),e_{1}\wedge e_{4}-e_{2}\wedge e_{3}\right>\\
-2R_{1323} & = & R_{1314}-R_{1323}+R_{2414}-R_{2423}\\
0 & = & R_{1314}+R_{1323}+R_{2414}-R_{2423}\\
0 & = & 2R_{1314},
\end{eqnarray*}
so 
\[
0=R_{1213}=R_{1214}=R_{1314}=R_{1323}=R_{1224}=R_{1223},
\]
and the manifold is flat.
\end{proof}

\end{document}